\theoremstyle{plain}
\newtheorem{theorem}{Theorem}[section]
\newtheorem{proposition}[theorem]{Proposition}
\newtheorem{lemma}[theorem]{Lemma}
\newtheorem{corollary}[theorem]{Corollary}
\newtheorem{question}[theorem]{Question}
\theoremstyle{definition}
\newtheorem{remark}[theorem]{Remark}
\newtheorem{convention}[theorem]{Convention}
\newcommand{\ZZ}{\mathbb{Z}}
\newcommand{\RR}{\mathbb{R}}
\newcommand{\vv}{\vec{v}}
\newcommand{\ee}{\vec{\epsilon}}
\newcommand{\be}{\vec{e}}
\newcommand{\Fix}{\operatorname{Fix}}
\newcommand{\cl}{\operatorname{cl}}
\newcommand{\Isom}{\operatorname{Isom}}
\newcommand{\kernel}{\operatorname{Ker}}
\newcommand{\KK}{\mathcal{K}}
\newcommand{\hk}{\mathfrak{h}}
\numberwithin{figure}{section}
\newcommand{\figer}[4]{
\begin{figure}[h]
\begin{center}
\includegraphics[width=#2\textwidth]{#1}
\end{center}
\caption{#4 }
\label{#3}
\end{figure}
}
\begin{document}

\title[Equivariant Genera of strongly invertible knots]
{The equivariant genera of marked strongly invertible knots
associated with $2$-bridge knots}

\author{Mikami Hirasawa}
\address{Department of Mathematics\\
Nagoya Institute of Technology\\ 
Nagoya city 466-8555 Japan}
\email{hirasawa.mikami@nitech.ac.jp}

\author{Ryota Hiura}
\address{Inuyama-Minami High School\\
Hasuike 2-21, Inuyama City 484-0835 Japan}
\email{ryhiura@gmail.com}

\author{Makoto Sakuma}
\address{Osaka Central Advanced Mathematical Institute\\
Osaka Metropolitan University\\
3-3-138 Sugimoto, 
Osaka City 558-8585, Japan\newline
\indent Department of Mathematics\\
Hiroshima University\\
Higashi-Hiroshima City 739-8526, Japan}
\email{sakuma@hiroshima-u.ac.jp}

\makeatletter
\@namedef{subjclassname@2020}{%
  \textup{2020} Mathematics Subject Classification}
\makeatother

\subjclass[2020]{Primary 57K10, Secondary 57M60}

\thanks{
The first author is supported by JSPS KAKENHI Grant Number JP18K03296.
The third author is supported by JSPS KAKENHI Grant Number JP20K03614
and by Osaka Central Advanced Mathematical Institute 
(MEXT Promotion of Distinctive Joint Research Center Program JPMXP0723833165).
}

\begin{abstract}
A marked strongly invertible knot is a triple $(K,h,\delta)$
of a knot $K$ in $S^3$, a strong inversion $h$ of $K$,
and a subarc $\delta \subset \Fix(h)\cong S^1$ bounded by $\Fix(h)\cap K\cong S^0$.
An invariant Seifert surface for $(K,h,\delta)$ is an $h$-invariant Seifert surface 
for $K$ that intersects $\Fix(h)$ in the arc $\delta$.
In this paper, we completely determine the equivariant genus
(the minimum
of the genera of invariant Seifert surfaces for $(K,h,\delta)$) 
of every marked strongly invertible knot $(K,h,\delta)$
with $K$ a $2$-bridge knot.
\end{abstract}
\maketitle

\section{Introduction}
\label{sec:intro} 
A smooth knot $K$ in $S^3$ is said to be {\it strongly invertible}
if there is a smooth involution $h$ of $S^3$
which leaves $K$ invariant and fixes an unknotted loop intersecting $K$ in two points.
The involution $h$ is called a {\it strong inversion} of $K$.
As in \cite{HHS1, Sakuma1986},
we use the term {\it strongly invertible knot}
to mean a pair $(K,h)$ of a knot $K$ and a strong inversion $h$,
and regard two strongly invertible knots $(K,h)$ and $(K',h')$ {\it equivalent}
if there is an orientation-preserving diffeomorphism $\varphi$
of $S^3$ mapping $K$ to $K'$ such that $h'=\varphi h\varphi^{-1}$.

Recently, strongly invertible knots attract attention of various researchers
(see \cite[Sections 2 and 6]{HHS1} and references therein).
In particular, significant progresses 
on the equivariant $4$-genera of strongly invertible knots
have been made by
\cite{Alfieri-Boyle, Boyle-Issa2021a, Dai-Mallick-Stoffregen, Di-Prisa,
Di-Prisa-2, Di-Prisa-Framba, Miller-Powell}.
This paper is a sequel of \cite{HHS1}
which is devoted to the study of the ($3$-dimensional) 
equivariant genera 
of strongly invertible knots.
The ($3$-dimensional)
equivariant genera are actually defined for marked strongly invertible knots.

A {\it marked strongly invertible knot}
is a triple $(K,h,\delta)$,
where $(K,h)$ is a strongly invertible knot
and $\delta$ is a subarc of $\Fix(h)$ bounded by $\Fix(h)\cap K$.
Two marked strongly invertible knots $(K,h,\delta)$ and $(K',h',\delta')$ are regarded to be
{\it equivalent} if there is an orientation-preserving diffeomorphism $\varphi$
of $S^3$ mapping $K$ to $K'$ such that 
$h'=\varphi h\varphi^{-1}$ and $\delta'=\varphi(\delta)$.
By an  {\it invariant Seifert surface}
{\it for a marked strongly invertible knot} $(K,h,\delta)$,
we mean a Seifert surface $S$ for $K$
such that $h(S)=S$ and $\Fix(h)\cap S=\delta$.
Every marked strongly invertible knot $(K,h,\delta)$ admits an invariant Seifert surface
(see \cite{Boyle-Issa2021a, HHS1, Hiura}), and 
the {\it equivariant genus} 
$g(K,h,\delta)$ of $(K,h,\delta)$ is 
defined to be the minimum
of the genera of invariant Seifert surfaces for $(K,h,\delta)$.
An invariant Seifert surface for $(K,h,\delta)$ is said to be of 
{\it minimal equivariant genus}
if its genus is $g(K,h,\delta)$.

As for a periodic knot, namely, a knot $K$ which is 
preserved by a periodic rotation of $S^3$ whose axis is
disjoint from $K$, 
Edmonds \cite{Edmonds}  has shown that
every periodic knot admits an invariant minimal genus Seifert surface
(cf. Edmonds-Livingston \cite{Edmonds-Livingston}).
As observed in the previous paper \cite[Proposition 1.5]{HHS1}, 
it is also true for a marked strongly invertible knot $K$ 
if $K$ is a fibered knot.
However, in general $g(K,h,\delta)$ is not realized by 
a minimal genus Seifert surface for $K$.
Even so, the following two variants of Edmonds' theorem hold:

\begin{theorem}[{\cite[Theorem 1.3]{HHS1}}]  
\label{thm:disjoint-Seifert}
Let $(K,h)$ be a strongly invertible knot.
Then there is a minimal genus Seifert surface $F$ for $K$ 
such that $F$ and $h(F)$ have disjoint interiors.
\end{theorem}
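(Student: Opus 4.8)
The plan is to start from an arbitrary minimal genus Seifert surface and move it, within its minimal genus class, into a position disjoint in the interior from its own $h$-image. First I would record the symmetry. If $F$ is a minimal genus Seifert surface for $K$, then so is $h(F)$, since $h$ is an orientation-preserving diffeomorphism of $S^3$ carrying $K$ to $K$ and preserving genus; as $h$ is a strong inversion it reverses the orientation of $K$, so $\partial h(F)=-K$ as oriented curves. Put $F$ and $h(F)$ in general position. The elementary but crucial observation is that, because $h^2=\mathrm{id}$, the intersection is $h$-invariant: $h(F\cap h(F))=h(F)\cap F=F\cap h(F)$. Hence $h$ permutes the components of $F\cap h(F)$, pairing a generic circle $c$ with a distinct circle $h(c)$, while the circles that are setwise preserved are exactly those meeting the fixed axis $\Fix(h)$. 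After an isotopy supported near $K$ I would arrange that $F$ and $h(F)$ meet the knot in standard position, so that $F\cap h(F)$ is the union of $K$ with a finite $h$-invariant family $\Gamma$ of circles in the interiors.

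The second step removes the inessential circles. Since a minimal genus Seifert surface is incompressible and $S^3$ is irreducible, an innermost disk argument cancels every circle of $\Gamma$ that bounds a disk on $F$ or on $h(F)$: such a circle, innermost on one surface, bounds a disk on the other by incompressibility, the resulting sphere bounds a ball, and one isotopes $F$ across that ball. Because $\Gamma$ is $h$-invariant I would organize these cancellations in $h$-orbits. As these are ambient isotopies of $F$ they preserve minimal genus, and the outcome is a configuration in which every remaining circle of $\Gamma$ is essential on both $F$ and $h(F)$.

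The heart of the argument is the elimination of these essential circles, and here incompressibility alone is not enough: two minimal genus Seifert surfaces for the same knot cannot in general be isotoped to have disjoint interiors (this is precisely the failure of the Kakimizu complex to be complete), so I must use both minimal genus and the symmetry. The route I would take follows Edmonds \cite{Edmonds} and uses least area surfaces. Average a metric to obtain an $h$-invariant Riemannian metric on the exterior $E(K)$, adapted near the boundary torus, and replace $F$ by a least area representative of its isotopy class (embeddedness and existence by Freedman--Hass--Scott / Hass--Scott). Then $h(F)$ is least area in its class by invariance of the metric, so by Freedman--Hass--Scott the surfaces $F$ and $h(F)$ either coincide, are disjoint, or meet only in curves essential on both; in the first two cases we are done. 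To force one of these, I would choose $F$ so that its isotopy class is compatible with $h$ -- either an $h$-invariant class, or a class interchanged by $h$ with a Kakimizu-adjacent one -- by analyzing the action of the involution $h$ on the set of minimal genus Seifert surfaces; an invariant class gives disjointness by uniqueness of least area surfaces, and an interchanged adjacent pair is itself a disjoint pair realizing $F$ and $h(F)$.

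The genuine difficulty, and what I expect to be the main obstacle, is the equivariance: the modification of $F$ cannot be performed independently of $h(F)$, since any change to $F$ forces the mirror change on $h(F)$. In the cut-and-paste picture this means that when I reduce along a circle $c$ using a ball $B$, the surface $h(F)$ is simultaneously altered near $h(B)$; the two alterations are automatically compatible and jointly reduce $|\Gamma|$ when $B\cap h(B)=\emptyset$, so the real work is the case $B\cap h(B)\neq\emptyset$, which forces a careful analysis near $\Fix(h)$ and an equivariant resolution for the $h$-invariant circles of $\Gamma$ that meet the axis. In the least area picture the same obstacle reappears as the question of producing an $h$-compatible minimal genus isotopy class, that is, a fixed or adjacent-interchanged vertex for the action of $h$ on the space of such classes. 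Verifying that the structure of a strong inversion -- and in particular its fixed axis -- always makes this possible without raising the genus is the crux on which the whole proof turns.
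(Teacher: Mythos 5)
Your setup is fine and your self-diagnosis is accurate, but the proposal does not contain a proof: the step you yourself call ``the crux'' --- producing a minimal genus Seifert surface whose isotopy class is fixed by $h_*$ or interchanged with a Kakimizu-adjacent class --- is not a technical detail left to verify; it \emph{is} the theorem. Given such a class, the Freedman--Hass--Scott machinery you describe does convert it into an actual disjoint pair; conversely, the theorem instantly yields such a class. So ``choose $F$ so that its isotopy class is compatible with $h$'' is circular, and nothing in your earlier steps (equivariant removal of inessential circles, incompressibility, irreducibility) feeds into it. Note also that connectivity of $MS(K)$ (Scharlemann--Thompson \cite{Scharlemann-Thompson}, Kakimizu \cite{Kakimizu1988}) is far too weak to produce a vertex $v$ with $d(v,h_*v)\le 1$: an involution of a connected complex can move every vertex arbitrarily far (the antipodal map on a long even cycle). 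What would suffice is contractibility of $MS(K)$ (Przytycki--Schultens \cite{Przytycki-Schultens}) together with a fixed-point theorem for the induced involution, but you invoke neither, and that route is much heavier than what the statement needs.

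What actually closes the gap --- the present paper quotes the result from \cite{HHS1}, whose proof is a variant of Edmonds' exchange-and-roundoff argument \cite{Edmonds} --- is a cut-and-paste applied directly to an arbitrary minimal genus $F$ and its image $h(F)$, rather than a two-stage ``find a good class, then realize it'' scheme. Orient $F$ and $h(F)$ coherently as Seifert surfaces (possible precisely because $h$ reverses the orientation of $K$, so one reverses the pushed-forward orientation on $h(F)$, making the two surfaces homologous in the exterior); arrange equivariantly that their boundaries are disjoint and all intersection curves are essential in both; then form the double curve sum $W$ of $F$ and $h(F)$. The oriented resolution is canonical and unchanged when both orientations are reversed, so $h(W)=W$. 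The sum splits as two disjoint embedded surfaces $W_1\sqcup W_2$ (the two ``levels'' of the sum in the infinite cyclic cover), each carrying the Seifert class; since no intersection curve bounds a disk in either surface, no component is a sphere, and the count $\chi(W_1)+\chi(W_2)=2\chi(F)$ forces both to be minimal genus Seifert surfaces. Since $h$ preserves $W$, it either swaps $W_1$ and $W_2$ --- then $W_1$ and $h(W_1)=W_2$ are disjoint and you are done --- or it fixes each $W_i$, giving an $h$-invariant minimal genus surface; in that case push $W_1$ off itself to one side: because $h|_{W_1}$ is orientation-reversing (it reverses $\partial W_1=K$ while preserving the orientation of $S^3$), $h$ exchanges the two sides of $W_1$, so the push-off and its $h$-image are disjoint. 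This last point also repairs a slip in your text: when $F=h(F)$ you are not ``done,'' since then the interiors coincide rather than being disjoint. A second small slip: an $h$-invariant circle of $F\cap h(F)$ need not meet $\Fix(h)$, as $h$ may act freely on it.
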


\begin{theorem}[{\cite[Theorem 1.4]{HHS1}}]  
\label{thm:disjoint-Seifert2}
Let $(K,h,\delta)$ be a marked strongly invertible knot, and 
let $F$ be a minimal genus Seifert surface for $K$ 
such that $F$ and $h(F)$ have disjoint interiors.
Then there is an invariant Seifert surface $S$ for $(K, h, \delta)$
of minimal equivariant genus 
such that the interior of $S$ is disjoint from those of $F$ and $h(F)$.
\end{theorem}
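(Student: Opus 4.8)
The plan is to begin with an invariant Seifert surface $S_0$ for $(K,h,\delta)$ whose genus equals $g(K,h,\delta)$ (one exists by the definition of the equivariant genus as a minimum), and to remove all of its interior intersections with the closed surface $\hat F:=F\cup h(F)$ by an $h$-equivariant cut-and-paste that never raises the genus. The surface $\hat F$ is indeed a closed $h$-invariant surface embedded in $S^3$: because $h(K)=K$, the surface $h(F)$ is a Seifert surface for $K$ with $\partial h(F)=K=\partial F$, and since $F$ and $h(F)$ have disjoint interiors one has $F\cap h(F)=K$, so the two glue along $K$ to an embedded closed surface with $h(\hat F)=\hat F$. Such a surface separates $S^3$, and since any complementary region meeting $\Fix(h)$ in its interior is $h$-invariant, $h$ preserves each complementary region of $\hat F$; in particular the region $A$ containing $\interior(\delta)$ is invariant, and achieving $\interior(S_0)\cap\hat F=\emptyset$ is the same as pushing $S_0$ into $A$.

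The point that renders the whole reduction equivariant is that the intersection avoids the fixed axis. If a point $p\in\Fix(h)$ lay in $\interior(F)$, then $h(p)=p\in\interior(h(F))$ would contradict the disjointness of interiors; hence $\Fix(h)\cap\hat F$ consists only of the two points $\Fix(h)\cap K$. As $\Fix(h)\cap S_0=\delta$ meets $\hat F$ only in $\partial\delta\subset K$, no circle or arc of the interior intersection $\interior(S_0)\cap\hat F$ meets $\Fix(h)$. Furthermore each such curve has its interior in $\hat F\setminus K=\interior(F)\sqcup\interior(h(F))$ and therefore lies entirely in one of $\interior(F)$, $\interior(h(F))$; since these are disjoint, a curve $c\subset\interior(F)$ is always disjoint from its mirror $h(c)\subset\interior(h(F))$, with disjoint regular-neighbourhood supports. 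Hence any surgery performed on $S_0$ near $F$ may be carried out simultaneously with its $h$-image near $h(F)$ without the two interfering; the outcome is automatically $h$-invariant, and because every support is disjoint from $\interior(\delta)$, it still satisfies $\Fix(h)\cap S=\delta$.

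I would then eliminate the intersections in two stages. First, passing to the knot exterior and using the $0$-framing common to all Seifert surfaces, isotope $\partial S_0$, $\partial F$ and $\partial h(F)$ on $\partial N(K)$ to three disjoint parallel longitudes arranged symmetrically under $h$ (with $h$ swapping the last two and preserving the first); this removes every arc component, so that $S_0\cap\hat F$ becomes a disjoint union of circles. Second, using that $F$, being of minimal genus, is incompressible, I would remove the remaining circles by the standard innermost-disk and double-curve-sum arguments, each of which surgers $S_0$ without raising its genus. By minimality of $g(K,h,\delta)$ the genus cannot drop, so every closed component split off is a sphere that may be discarded while the component carrying $K$ is retained. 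Performing each move together with its $h$-mirror near $h(F)$ and iterating drives $\interior(S)\cap\hat F$ to $\emptyset$, after which $\partial S$ is isotoped back onto $K$.

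The main obstacle lies at the two branch points $\Fix(h)\cap K$, where $S_0$, $F$, $h(F)$ and $\Fix(h)$ all come together and where an intersection curve and its mirror may share an endpoint. One must verify that the equivariant separation of the three longitudes on $\partial N(K)$ can be arranged compatibly with the four fixed points of $h|_{\partial N(K)}$ and with the marking $\Fix(h)\cap S_0=\delta$, and that the ensuing innermost-disk surgeries, which occur away from $\interior(\delta)$, neither disturb the equality $\Fix(h)\cap S=\delta$ nor break the boundary $K$. This local analysis near the axis, peculiar to the marked equivariant setting, is the delicate heart of the argument; once it is in place, the global genus accounting follows at once from the minimality of $g(K,h,\delta)$ together with the incompressibility of the minimal-genus surface $F$.
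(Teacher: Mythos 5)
You should first note that this paper does not actually prove the statement: it is quoted verbatim from \cite[Theorem 1.4]{HHS1}, so the only proof to compare against is the one in that earlier paper, which — like your sketch — proceeds by equivariant cut-and-paste, but whose real engine is the oriented (double/triple) curve sum with Euler-characteristic bookkeeping. The framework you set up is correct and matches that approach: $\hat F=F\cup h(F)$ is a closed embedded $h$-invariant surface whose two complementary regions are each $h$-invariant; every curve of $\interior(S_0)\cap\hat F$ misses $\Fix(h)$, and a curve in $\interior(F)$ is disjoint from its mirror in $\interior(h(F))$, so moves can be mirrored without interference and the marking $\Fix(h)\cap S=\delta$ survives; the $0$-framing argument kills arc components; and equivariant innermost-disk surgery together with minimality of $g(K,h,\delta)$ correctly disposes of every intersection circle that is inessential in $F$, or essential in $F$ but inessential in $S_0$ (the latter being impossible by incompressibility of the minimal genus surface $F$).

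The genuine gap is the case of circles of $\interior(S_0)\cap\hat F$ that are essential in \emph{both} $S_0$ and $F$. Innermost-disk arguments give nothing here: incompressibility of $F$ only says such a curve bounds no disk with interior disjoint from $F$; it does not make the curve removable by surgery on $S_0$, and $S_0$ itself is not known to be incompressible. Your single sentence invoking ``double-curve-sum arguments, each of which surgers $S_0$ without raising its genus'' mis-describes the tool that is actually needed: the curve sum is not a surgery on $S_0$ alone. One must form the $h$-invariant oriented cut-and-paste $X=S_0+F+h(F)$ (invariance uses that $h$ reverses both surface orientations simultaneously), which is an embedded surface in the class $3[F]$ built from pieces of all three surfaces with $\chi(X)=\chi(S_0)+2\chi(F)$; split it, via the level decomposition of its complement, into three Seifert surfaces $X_{01},X_{12},X_{23}$; observe that $h$ reverses the coorientation and hence swaps $X_{01}\leftrightarrow X_{23}$ while preserving $X_{12}$, so that $\delta\subset X_{12}$ and $\Fix(h)\cap X_{12}=\delta$; deduce $g(X_{12})\le g(S_0)$ from $\chi$-additivity together with $g(X_{01})=g(X_{23})\ge g(K)=g(F)$; and finally check that the intersection with $\hat F$ strictly decreases, so that induction applies. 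None of this bookkeeping — the splitting, the pairing of the outer pieces under $h$, the genus count, the descent — appears in your proposal, and without it the essential-essential circles cannot be removed. This, rather than the local analysis at the two branch points (which is routine), is the delicate heart of the argument.
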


In this paper, we use Theorem \ref{thm:disjoint-Seifert2} to
obtain our main result (Theorem \ref{thm:main}) which
determines the equivariant genera of 
all {\it marked strongly invertible $2$-bridge knots},
i.e., marked strongly invertible knots $(K,h,\delta)$
with $K$ a $2$-bridge knot.
We actually visualize invariant Seifert surfaces 
attaining equivariant genera.
Appendix presents a table of the
equivariant genera for $2$-bridge knots up to 10 crossings.

In the previous paper \cite[Theorem 1.2]{HHS1}, 
we proved that the gap between the genus 
$g(K)$ and the equivariant genus $g(K,h,\delta)$ can be arbitrarily large.
Our main Theorem \ref{thm:main} implies the following refinement of the result,
which says that
the genus $g(K)$ and the equivariant genus $g(K,h,\delta)$
are totally independent, except for the natural relations 
(i) $g(K)\le g(K,h,\delta)$ and 
(ii) $g(K,h,\delta)=0$ if $g(K)=0$.
(Here the second condition is due to Marumoto \cite[Proposition 2]{Marumoto}
(cf. \cite[Proposition 2.1(1)]{HHS1}).)

\begin{corollary}\label{cor:gap}
For any pair of positive integers $(g,\hat g)$ with $g\le \hat g$,
there is a marked strongly invertible knot $(K,h,\delta)$
such that $(g(K), g(K,h,\delta))=(g,\hat g)$. 
\end{corollary}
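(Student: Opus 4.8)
The plan is to deduce the corollary directly from Theorem~\ref{thm:main}, by producing for every admissible pair $(g,\hat g)$ an explicit marked strongly invertible $2$-bridge knot whose two genera are exactly $g$ and $\hat g$. The starting observation is that the ordinary genus of a $2$-bridge knot is governed by its continued fraction: if $K$ is presented by an even continued fraction of length $2n$, then the associated plumbing of $2n$ bands is a minimal genus Seifert surface and $g(K)=n$. Hence $g(K)$ is controlled solely by the length of the expansion and can be prescribed independently of the individual entries.

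To realize a pair with $g=\hat g$, I would take a fibered $2$-bridge knot, that is, one whose even continued fraction has all entries equal to $\pm 2$, with length $2g$. Such a knot has $g(K)=g$, and since it is fibered, the fibre surface is the unique (hence invariant) minimal genus Seifert surface; by the fibered case recorded in \cite[Proposition~1.5]{HHS1} this gives $g(K,h,\delta)=g(K)=g$ for the corresponding marking. This supplies the base case of zero gap for every $g\ge 1$.

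To open a gap of size $\hat g-g>0$, I would deform this fibered knot by replacing some of the entries $\pm 2$ with even integers of larger absolute value, keeping the length $2g$ fixed. Since the length is unchanged and the resulting expansion is still a reduced even continued fraction, the ordinary genus remains $g(K)=g$; but the knot is no longer fibered, and Theorem~\ref{thm:main} assigns to it, for a suitable choice of strong inversion $h$ and subarc $\delta\subset\Fix(h)$, an equivariant genus strictly larger than $g$. The task is then to read off from the formula of Theorem~\ref{thm:main} how $g(K,h,\delta)$ depends on the enlarged entries, and to choose these entries so that the value equals precisely $\hat g$. Finally, the relations (i) $g(K)\le g(K,h,\delta)$ and (ii) $g(K)=0\Rightarrow g(K,h,\delta)=0$ show that the range $1\le g\le\hat g$ is the full set of admissible pairs, so this family is exhaustive.

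The main obstacle is the decoupling of the two genera: one must exhibit deformations that move the equivariant genus through every integer $\ge g$ while provably fixing the minimal genus at $g$. The delicate points are to verify that the perturbed continued fractions stay of minimal even length, so that $g(K)$ is not inadvertently lowered by a hidden cancellation, and to track, inside the explicit formula of Theorem~\ref{thm:main}, exactly how the marking $\delta$ together with the enlarged entries contributes to the equivariant genus, so that the target value $\hat g$ can be hit on the nose.
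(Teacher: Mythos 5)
Your proposal is correct and takes essentially the same route as the paper, which derives Corollary~\ref{cor:gap} directly from the formulae of Theorem~\ref{thm:main}(2) via explicit $2$-bridge examples. The one step you leave as a ``task'' is immediate: take $n=g$, $a_1=\hat g-g+1$, and all other entries equal to $1$, so that $g(K)=n=g$ while $g(K,h_{q/p},\tau_{q/p})=\sum_{i=1}^n|a_i|=\hat g$; moreover your worry about hidden cancellation is vacuous, since the even continued fraction with nonzero entries is unique and Hatcher--Thurston give $g(K)=n$ for any such expansion.
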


As for the behavior of the equivariant genera
of a strongly invertible knot $(K,h)$
for the two different choices of subarcs $\delta$ and
$\delta^c$ of $\Fix(h)$,
we have the following corollary
which says that the gap between the equivariant genera 
can be arbitrarily large.

\begin{corollary}\label{cor:gap2}
For any integer $d$,
there is a strongly invertible knot $(K,h)$
such that
$g(K, h, \delta)-g(K, h, \delta^c)=d$.
\end{corollary}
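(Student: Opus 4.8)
The plan is to deduce the corollary from the explicit computation of equivariant genera furnished by Theorem~\ref{thm:main}. First I would observe that the two markings $\delta$ and $\delta^c$ enter symmetrically: relabelling which arc is called $\delta$ replaces $g(K,h,\delta)-g(K,h,\delta^c)$ by its negative. Hence it suffices to realize every nonnegative value of $d$, the case $d=0$ being witnessed by any strongly invertible knot whose two arcs are interchanged by a symmetry. So fix $d>0$ and aim to produce a single strongly invertible $2$-bridge knot $(K,h)$ with $g(K,h,\delta)-g(K,h,\delta^c)=d$.

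The key step is to select a family of marked strongly invertible $2$-bridge knots for which \emph{both} equivariant genera can be read off from the formula of Theorem~\ref{thm:main}. Recall that for a $2$-bridge knot the strong inversion together with the choice of arc is encoded by combinatorial data (a rational parameter with a marked endpoint), and that Theorem~\ref{thm:main} expresses $g(K,h,\delta)$ as an explicit function of these data. The two complementary arcs $\delta,\delta^c$ correspond to two different reductions of this data, so in general the two evaluations of the formula disagree. I would choose a one-parameter family so that the formula yields $g(K,h,\delta^c)$ equal to the ordinary genus $g(K)$---the smallest value permitted by relation (i) $g(K)\le g(K,h,\delta)$ of the Introduction---while $g(K,h,\delta)$ exceeds $g(K)$ by a prescribed amount. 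Concretely, Corollary~\ref{cor:gap} already supplies, for each pair $(g,\hat g)$ with $g\le\hat g$, an example with $(g(K),g(K,h,\delta))=(g,\hat g)$; the additional ingredient is to verify, using the same formula, that for these examples the complementary marking satisfies $g(K,h,\delta^c)=g(K)=g$.

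Granting this, the difference $g(K,h,\delta)-g(K,h,\delta^c)=\hat g-g$ ranges over all nonnegative integers as $(g,\hat g)$ runs over pairs of positive integers with $g\le\hat g$, and combining with the sign change above yields every $d\in\ZZ$. The construction of the invariant surfaces underlying these genus computations would proceed, as throughout the paper, via Theorem~\ref{thm:disjoint-Seifert2}: starting from a minimal genus Seifert surface $F$ for $K$ with $F$ and $h(F)$ having disjoint interiors, one builds the invariant surface meeting $\Fix(h)$ in the required arc.

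The main obstacle I anticipate is controlling both markings at once. Making one equivariant genus large is essentially Corollary~\ref{cor:gap}; the new difficulty is to show that the complementary equivariant genus stays pinned at the minimum $g(K)$ (or at least at a value I can track). This requires understanding precisely how the combinatorial data governing Theorem~\ref{thm:main} transforms under the exchange $\delta\leftrightarrow\delta^c$, and checking that for the chosen family the complementary arc admits an invariant Seifert surface of ordinary minimal genus. Establishing that lower bound for $g(K,h,\delta)$ while simultaneously exhibiting the efficient surface realizing $g(K,h,\delta^c)=g(K)$ is where the real work lies.
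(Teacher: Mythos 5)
Your strategy---read the equivariant genera off Theorem~\ref{thm:main}(2) and use the relabelling $\delta\leftrightarrow\delta^c$ to handle signs---is exactly the paper's: the paper dispatches Corollary~\ref{cor:gap2} in one sentence as an immediate consequence of Theorem~\ref{thm:main}(2). However, your write-up stops short of a proof, and the obstacle you single out as ``where the real work lies'' is illusory; this is the one genuine gap. Theorem~\ref{thm:main}(2) determines \emph{all four} equivariant genera of $K(q/p)$ simultaneously: for the single strong inversion $h_{q/p}$, the two complementary markings are precisely $\tau_{q/p}$ and $\tau_{q/p}^c$, and their genera are the first two displayed formulas (which are identical in cases (2.1) and (2.2), and remain valid for torus knots by Remark~\ref{rem:main-torus}). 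So there is nothing left to ``control'' about the complementary marking, no efficient surface to exhibit, and no lower bound to establish---all of that was done once and for all in Sections~\ref{sec:construction} and~\ref{sec:proof} and is packaged into the theorem. What remains is only arithmetic: for $h=h_{q/p}$ one has $g(K,h,\tau_{q/p})-g(K,h,\tau_{q/p}^c)=\sum_{i=1}^n(|a_i|-1)-\#\{i\,:\,|b_i|>1\}$. Taking $K=K[2(d+1),2]$ (so $n=1$, $a_1=d+1$, $b_1=1$) realizes any $d\ge 1$; taking $K=K[2,4,2,4,\dots,2,4]$ with $n=|d|$ pairs (all $a_i=1$, $b_i=2$) realizes any $d\le -1$ directly, so even your relabelling step, though correct, is dispensable; and $d=0$ is realized by the trefoil via Theorem~\ref{thm:main}(1). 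Your detour through Corollary~\ref{cor:gap} is likewise unnecessary and slightly backwards: its examples are themselves only specified by continued fractions, and they pin the complementary marking at $g(K)$ only if one chooses them with all $|b_i|=1$---a condition you would have to impose by hand, at which point you are already evaluating both formulas as above.
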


Both this paper and the preceding paper \cite{HHS1} 
are also motivated by our interest 
in the natural action of the symmetry group 
$\mathrm{Sym}(S^3,K)=\pi_0\mathrm{Diff}(S^3,K)$ on
the {\it Kakimizu complex} $MS(K)$ of a knot $K$.
The complex was introduced by Kakimizu \cite{Kakimizu1988}
as the {\it flag} simplicial complex 
whose vertices correspond to the (isotopy classes of) minimal genus Seifert surfaces for $K$
and edges to pairs of such surfaces with disjoint interiors.
In \cite{HHS1} we observed the following corollary of 
Theorem \ref{thm:disjoint-Seifert},
which may be regarded as a refinement of a special case of 
a theorem proved by Przytycki-Schultens \cite[Theorem 1.2]{Przytycki-Schultens}.

\begin{corollary}[{\cite[Corollary 1.6]{HHS1}}]
\label{HHS-Corollary 1.6}
Let $(K,h)$ be a strongly invertible knot,
and let $h_*$ be the automorphism of $MS(K)$
induced from of the strong inversion $h$.
Then one of the following holds.
\begin{enumerate}
\item
There exists a vertex of $MS(K)$
which is fixed by $h_*$.
\item
There exists an $h_*$-invariant edge of $MS(K)$
on which $h_*$ acts as the reflection in the 
central point of the edge.
\end{enumerate}
\end{corollary}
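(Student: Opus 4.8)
The plan is to deduce this corollary almost formally from Theorem \ref{thm:disjoint-Seifert}, treating it as the combinatorial shadow of that geometric statement. First I would record that the strong inversion $h$ induces a simplicial automorphism $h_*$ of $MS(K)$: since $h$ is a diffeomorphism of $S^3$ preserving $K$, it carries each minimal genus Seifert surface to a minimal genus Seifert surface (genus being a topological invariant), it respects isotopy, and, being a homeomorphism, it preserves the property that two surfaces can be isotoped to have disjoint interiors. Hence $h$ descends to an automorphism $h_*$ of the flag complex $MS(K)$, and because $h$ is an involution ($h^2=\mathrm{id}$) the induced map satisfies $h_*^2=\mathrm{id}$, i.e.\ $h_*$ is a simplicial involution.

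Next I would invoke Theorem \ref{thm:disjoint-Seifert} to obtain a minimal genus Seifert surface $F$ for $K$ such that $F$ and $h(F)$ have disjoint interiors. Let $v\in MS(K)$ be the vertex represented by $F$, so that $h_*(v)$ is the vertex represented by $h(F)$. The disjointness of interiors then gives a clean dichotomy. If $F$ and $h(F)$ are isotopic, then $v=h_*(v)$, and $v$ is a vertex fixed by $h_*$, which is conclusion (1). Otherwise $v$ and $h_*(v)$ are distinct vertices whose representatives have disjoint interiors, so by the definition of $MS(K)$ they span an edge $e$.

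It remains to analyze the action of $h_*$ on this edge $e$ in the second case. Since $h_*^2=\mathrm{id}$, the map $h_*$ interchanges the two endpoints $v$ and $h_*(v)$ of $e$; consequently $e$ is $h_*$-invariant and $h_*$ restricts to the unique nontrivial involution of $e$, which is precisely the reflection in the central point of the edge. This yields conclusion (2) and completes the dichotomy.

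I do not expect a serious obstacle here, since the essential geometric content is packaged into Theorem \ref{thm:disjoint-Seifert}; the corollary is essentially a formal consequence. The only points that genuinely require care are the well-definedness of $h_*$ as a simplicial involution of $MS(K)$ and the verification that $h_*$ preserves both the minimal genus condition and the disjoint-interior relation defining edges. The mildly delicate step is confirming that, in the nonfixed case, the two vertices are genuinely joined by an edge rather than merely being a pair of distinct vertices; this is exactly where the disjoint-interiors conclusion of Theorem \ref{thm:disjoint-Seifert}, together with the flag structure of $MS(K)$, is indispensable.
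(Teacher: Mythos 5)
Your proposal is correct and follows exactly the route the paper intends: the paper presents this as an observed corollary of Theorem \ref{thm:disjoint-Seifert} (quoting \cite[Corollary 1.6]{HHS1}), and the derivation is precisely your dichotomy --- take the minimal genus surface $F$ with $F$ and $h(F)$ having disjoint interiors, obtaining either a fixed vertex (if $F\simeq h(F)$) or an $h_*$-invariant edge whose endpoints are swapped. Your added care about the well-definedness of $h_*$ as a simplicial involution and about the edge actually being spanned (via the disjoint-interiors definition of edges in $MS(K)$) is exactly the content implicitly used there.
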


In the final section,
we describe the actions of the strong inversions 
on $MS(K)$ for every $2$-bridge knot
(Theorem \ref{thm:Action-Kakimizu}), which
illustrates Corollary \ref{HHS-Corollary 1.6}.

\medskip
Finally, we note that the recent work \cite{Di-Prisa-Framba} by Di Prisa and Framba
is also focused on the study of the strongly invertible $2$-bridge knots,
where it is proved that 
their $4$-dimensional equivariant genera are non-zero,
and moreover that 
they have infinite order in 
the equivariant cobordism group (with respect to any choice of 
\lq\lq direction'').
(To be precise, the paper is forcused on the study 
 of $(K(q/p), h_{q/p})$ (see Section \ref{sec:statement-result}).)

\medskip
This paper is organized as follows.
In Section \ref{sec:statement-result},
we recall the classification of the marked strongly invertible
$2$-bridge knots (Proposition \ref{prop:2-bridge-marked-strongly-invertible-knots}),
and state the main theorem (Theorem \ref{thm:main}) which determines their equivariant genera. 
In Section \ref{sec:construction},  
we give a systematic construction of invariant Seifert surfaces 
for marked strongly invertible $2$-bridge knots
which turn out to realize the equivariant genera.
In Section \ref{sec:proof},  
we use \lq\lq invariant sutured manifolds\rq\rq\ to give lower bounds for equivariant genera,
which turn out to be sharp.
In  Section \ref{sec:kakimizu}, 
we give a description of the actions of the strong inversions on the Kakimizu complex
of $2$-bridge knots.
Appendix presents a table of equivariant genera of 
marked strongly invertible $2$-bridge knots up to 10 crossings.

\section{Classification of marked strongly invertible $2$-bridge knots and the statement of the main result}
\label{sec:statement-result} 
Let $K(q/p)$ be the 2-bridge knot of slope $q/p$. Then naturally $p$ is odd.
Since $K((q \pm p)/p)$ is isotopic to $K(q/p)$, 
{\it we always assume $q$ is even and $1<|q|<p$.}
Then  $q/p$ admits a unique continued fraction expansion of the following format, 
with non-zero integer entries.
Following our previous paper \cite{HHS1},
we use this \lq\lq negative\rq\rq\ format 
which is different from 
the \lq\lq positive\rq\rq\ one used in other papers like \cite{Sakuma1986}  etc.

\begin{align*}\label{eq:cfe}
\frac{q}{p}
&=
           \cfrac{1}{2a_1
          -\cfrac{1}{2b_1
          -\cfrac{1}{2a_2
          -\cfrac{1}{\ddots
          -\cfrac{1}{2a_n
          -\cfrac{1}{2b_n
          }}}}}}
&=:
[2a_1,2b_1,\cdots, 2a_{n}, 2b_{n}]
\end{align*}

We use the following notation for describing $2$-bridge knots throughout this paper, except for Section \ref{sec:kakimizu}.
\[
K(q/p)=K[2a_1,2b_1, \cdots, 2a_n, 2b_n]
\]
For example, $K(2/3) =K[2,2]$ is the positive trefoil knot and 
$K(2/5) =K[2,-2]$ is the figure eight knot.
Note that it is well-known that
$K(q/p)$ has genus $n$
(see \cite{Hatcher-Thurston}).
A $2$-bridge knot is a torus knot only when
$\pm q=p-1$.
Note that 
$K((p-1)/p)=
K[2, 2, \dots, 2]$ is 
a torus knot of type $T(2, p)$ of genus $(p-1)/2$.
Other $2$-bridge knots are hyperbolic knots.

\medskip
By \cite[Corollary 2.3 and Remark 2.4]{HHS1}, the following holds.

\begin{proposition}
\label{prop:2-bridge-marked-strongly-invertible-knots_0}
Up to equivalence, the number of marked strongly invertible knots associated with $K(q/p)$
is equal to two or four according to whether
$\pm q=p-1$ or not.
\end{proposition}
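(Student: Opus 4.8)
The plan is to derive the count from the classification of the strong inversions of $2$-bridge knots, and then to account for the two possible markings carried by each. I would begin by placing $K=K[2a_1,2b_1,\dots,2a_n,2b_n]$ in its standard $4$-plat position and recalling, following Sakuma \cite{Sakuma1986}, the two $\pi$-rotations of $S^3$ whose restrictions to $K$ are strong inversions; denote them $h$ and $h'$. For each, $\Fix(h)$ is an unknotted circle meeting $K$ in two points, and these two points cut $\Fix(h)$ into two arcs. Hence each strong inversion determines exactly two candidate markings $\delta,\delta^c$, and the marked strongly invertible knots associated with $K$ form a quotient of the set of pairs (strong inversion, marking), whose size is twice the number of strong inversions.

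The group acting on this set is $G:=\pi_0\mathrm{Diff}^+(S^3,K)$, the mapping class group of orientation-preserving diffeomorphisms of $S^3$ preserving $K$ setwise, since equivalence of marked strongly invertible knots is precisely the orbit relation for the $G$-action on such pairs. When $K$ is not a torus knot, i.e. $\pm q\neq p-1$, it is hyperbolic, so Mostow rigidity identifies $G$ with the orientation-preserving isometry group of $S^3\setminus K$, which is classically computed for $2$-bridge knots. The facts I would extract are: (i) $G$ contains exactly two strong inversions up to conjugacy, namely $h$ and $h'$, and these are inequivalent; and (ii) no element of $G$ normalizing a given strong inversion interchanges its two arcs. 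Granting (i) and (ii), the four pairs $(h,\delta),(h,\delta^c),(h',\delta'),(h',\delta'^c)$ lie in distinct $G$-orbits, which yields the count four.

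For a torus knot $K=T(2,p)$ (the case $\pm q=p-1$), the complement is Seifert fibered and $G$ is generated by a single strong inversion $h$. Since $h$ fixes $\Fix(h)$ pointwise it cannot exchange $\delta$ and $\delta^c$, while the only other class in $G$ is the identity; thus there is a unique strong inversion, its two markings remain inequivalent, and the count is two.

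The crux of the argument is the middle step: the determination of $G$ and, above all, of its action on the two arcs of each $\Fix(h)$. In the generic situation $G\cong\ZZ/2\oplus\ZZ/2$ and both (i) and (ii) are transparent. The difficulty concentrates in the special non-torus slopes with $q^2\equiv\pm1\pmod p$ (which contain the amphichiral $2$-bridge knots), where $G$ is strictly larger: here one must verify that none of the additional symmetries conjugates $h$ to $h'$, nor carries a marking to its complement, so that the count remains four rather than collapsing. I would settle this by translating each symmetry into an automorphism of the standard diagram --- equivalently into the residue relations $q'\equiv\pm q^{\pm1}\pmod p$ governing the equalities $K(q'/p)=K(q/p)$ --- and tracking its effect on $\Fix(h)$ and on the unordered pair $\Fix(h)\cap K$, thereby reading off directly that the marking is preserved.
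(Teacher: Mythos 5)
Your skeleton---classify strong inversions up to equivalence (two for hyperbolic $K(q/p)$, one for $T(2,p)$), note that each carries two markings, then show no symmetry identifies any of the resulting triples---is the same as the paper's, which likewise rests on Sakuma's computation of $\Isom^+(S^3\setminus K(q/p))$ for the classification of the inversions themselves. The genuine gap is in the step you treat as definitional: the claim that equivalence of marked strongly invertible knots ``is precisely the orbit relation for the $G$-action'' of $G=\pi_0\mathrm{Diff}^+(S^3,K)$ on pairs (inversion, marking). An equivalence between $(K,h,\delta)$ and $(K,h,\delta^c)$ is an actual diffeomorphism $\varphi$ with $\varphi h\varphi^{-1}=h$ and $\varphi(\delta)=\delta^c$, and its class in $G$ does not record how it permutes the two arcs of $\Fix(h)$: you must rule out a $\varphi$ that is \emph{isotopic to the identity}, normalizes $h$, and swaps $\delta$ with $\delta^c$, and nothing in your argument does so. For hyperbolic $K$ this can be repaired, but the tool you invoke---Mostow rigidity for $S^3\setminus K$, identifying $G$ with $\Isom^+$---is not enough: the unique isometry isotopic to such a $\varphi$ normalizes $h$, but since the isotopy is not $h$-equivariant one cannot conclude that this isometry still swaps the arcs. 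What is needed is rigidity for the quotient orbifold $(S^3\setminus K)/h$ (equivariant/orbifold Mostow), under which an orbifold isotopy preserves each singular arc componentwise; only after that reduction does your fact (ii) about the finite group settle the matter (and it does hold there: the period acts on $\Fix(h)$ as a reflection with one fixed point in each arc, hence preserves both).

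In the torus-knot case the gap is fatal as written: there is no hyperbolic structure, and ``the only other class in $G$ is the identity, thus the two markings remain inequivalent'' is a non sequitur, since an equivalence isotopic to the identity is exactly what has to be excluded. This is precisely the point the paper handles by a different mechanism: it invokes \cite[Proposition 2.2(2)]{HHS1}, which says that for a nontrivial knot admitting a cyclic period $2$ (as every $K(q/p)$ does) the two markings of any strong inversion are inequivalent; that criterion applies uniformly to the torus and hyperbolic cases, and it---together with \cite[Corollary 2.3 and Remark 2.4]{HHS1} and Sakuma's classification of the inversions---is the paper's proof. Two smaller inaccuracies: the amphichiral slopes $q^2\equiv-1\pmod p$ are not exceptional here, because equivalence only allows orientation-preserving diffeomorphisms, so only $q^2\equiv+1\pmod p$ (where $\Isom^+\cong D_8$) needs the extra analysis you describe; and in that palindromic case the two inequivalent inversions are $h_{q/p}$ and the \emph{exceptional} $h'_{q/p}$, the two plat rotations $h_{q/p}$ and $h_{q'/p}$ having become equivalent---so the pair you must show are non-conjugate depends on the case, which your uniform notation $h,h'$ obscures.
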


Now we are going to see the four marked strongly invertible knots
associated to $2$-bridge knots $K(q/p)$.
(The case where $K(q/p)$ is a torus knot is included, but
the number 4 (of the marked strongly invertible knots)
is reduced to 2.)

For $K(q/p)$,
let $\beta$ be the $4$-braid  
$
\sigma_{2}^{2a_1} \sigma_{1}^{2b_1} 
\sigma_{2}^{2a_2}\sigma_{1}^{2b_2}
\cdots
\sigma_{2}^{2a_n} \sigma_{1}^{2b_n}$
described with Artin's standard generators $\sigma_1$ and  $\sigma_2$.
Then as in Figure \ref{fig:tauarcs1}(1), $K(q/p)$ is
depicted as the plat closure $\overline{\beta}$ of $\beta$.
As in Figure \ref{fig:tauarcs1}(2), we can also depict $K(q/p)$ 
as the plat closures of  another $4$-braid, $\beta'$, defined by
$$\beta'
=\sigma_{2}^{2a_1}\ (\sigma_{1}\sigma_{3})^{b_1}\ 
\sigma_{2}^{2a_2}\ (\sigma_{1}\sigma_{3})^{b_2}\  \cdots\ \ 
\sigma_{2}^{2a_n}\ (\sigma_{1}\sigma_{3})^{b_n},$$
together with the axis of $h_{q/p}$ and
the short and long arcs $\tau_{q/p}$ and $\tau_{q/p}^{c}$ of 
$\Fix(h_{q/p})$.
\figer{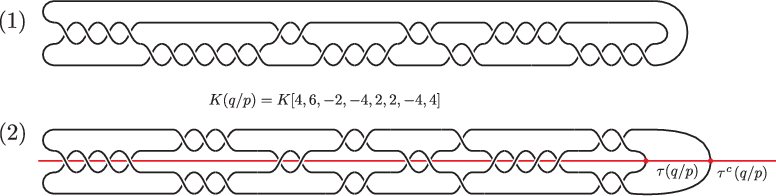}{1}{fig:tauarcs1}{$K(q/p)$ and its short and long involution arcs}
Since $K(q/p)$ has cyclic period $2$, 
it follows from \cite[Proposition 2.2(2)]{HHS1} that
$(K(q/p),h_{q/p},\delta)$ with $\delta\in\{\tau_{q/p},\tau^c_{q/p}\}$
are distinct marked strongly invertible knots.

\medskip
By \cite[Proposition 2.1(3)]{HHS1},
$K(q/p)$ admits one more strong inversion (see Figure \ref{fig:tauarcs2}).
To describe it,
note that  $K[2a_1,2b_1, \cdots, 2a_n, 2b_n]$ is isotopic to 
$K(q'/p)=K[2b_n, 2a_n, \cdots, 2b_1, 2a_1]$, where
$q'/p=[2b_n, 2a_n, \cdots, 2b_1, 2a_1]$.
(Here $q'$ is an even integer with $qq'\equiv 1$ $\pmod p$.)
By applying the preceding construction to this setting, 
we obtain the strong inversion $h_{q'/p}$ of $K(q'/p)$
and the short and long arcs $\tau_{q'/p}$ and $\tau_{q'/p}^{c}$ of 
$\Fix(h_{q'/p})$ as in Figure \ref{fig:tauarcs2}.

\figer{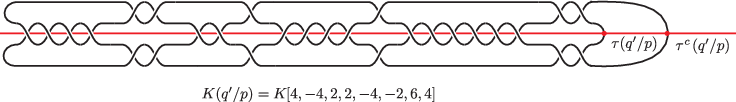}{1}{fig:tauarcs2}{$K(q'/p)$ and its short and long involution arcs}

By pulling them back through an isotopy between $K(q/p)$ and $K(q'/p)$,
we obtain a strong inversion of $K(q/p)$,
which we continue to denote by $h_{q'/p}$.
We also continue to denote the subarcs of
the fixed point set of the strong inversion $h_{q'/p}$ of $K(q/p)$,
obtained as the pullback of $\tau_{q'/p}$ and $\tau_{q'/p}^{c}$,
by the same symbols respectively. 

If $q\ne q'$, i.e., $q^2 \not\equiv 1\pmod{p}$,
then $K(q/p)$ is hyperbolic and
$\Isom^+(S^3\setminus K(q/p))$ (orientation-preserving isometry group)
is isomorphic to 
$\ZZ/2\ZZ\oplus \ZZ/2\ZZ$.
Thus, as described in \cite[Proposition 3.6 and its proof]{Sakuma1986},
$h_{q/p}$ and $h_{q'/p}$ are the two distinct strong inversions of $K(q/p)$.
See \cite[Fig. 3.2(1)]{Sakuma1986} or \cite[Fig. 2(1)]{ALSS},
where 
the two strong inversions  
are illustrated simultaneously in a single knot diagram.

\medskip
Finally, if $q= q'$, i.e., $q^2 \equiv 1\pmod{p}$,
then $\Isom^+(S^3\setminus K(q/p))$ is isomorphic to 
the dihedral group $D_8$ of order $8$.
The strong inversions $h_{q/p}$ and $h_{q'/p}$ are
equivalent, but 
another exceptional strong inversion $h'_{q/p}$ of $K(q/p)$ can be seen
as follows: 
When $q^2\equiv 1 \pmod p$, $q/p$ has a palindromic
continued fraction expansion, i.e., 
$q/p=
[2c_1,2c_2, \cdots, 2c_{n-1},2c_n,  2c_n,2c_{n-1}, \cdots, 2c_2,2c_1].$
In this case, $K(q/p)$ has a symmetric diagram
in which $\Fix(h'_{q/p})$ is
vertical to the projection plane as in Figure \ref{fig:palindromic}, where $K=K[2,-4,-4,2]$.
We can see that $K$ has such a diagram from the fact that
$K$ is the boundary of a minimal genus Seifert surface obtained by
a linear plumbing of unknotted annuli with $k$ half-twists where
$k=2, -4, -4, 2$ (see Figure \ref{fig:palindromicsurf}(1)).
The short vertical arc $\rho_{q/p}$ is in the central crossing ball, 
and the long complementary arc $\rho_{q/p}^c$ goes through infinity.
Then, 
by \cite[Proposition 2.2(2)]{HHS1}, 
$(K(q/p),h'(q/p),\delta)$ with $\delta\in\{\rho_{q/p},\rho_{q/p}^c\}$
are distinct marked strongly invertible knots.
See \cite[Fig. 3.2(2)]{Sakuma1986} or \cite[Fig. 3(1)]{ALSS},
where 
the two strong inversions are illustrated simultaneously in a single knot diagram.

\figer{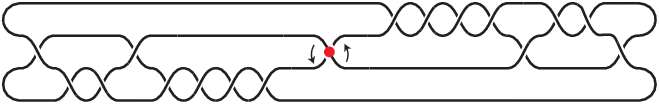}{0.55}{fig:palindromic}
{A palindromic diagram for $K[2,-4,-4,2]$, obtained as the boundary 
a linear plumbing on unknotted annuli}

In summary, we obtain the following proposition:

\begin{proposition}
\label{prop:2-bridge-marked-strongly-invertible-knots}
The marked strongly invertible knots associated with a nontrivial $2$-bridge knot  
$K=K(q/p)$ 
are classified as follows.

{\rm (1)} Suppose $\pm q=p-1$, i.e., $K$ is a torus knot.
Then $K$ has a unique strong inversion $h_{q/p}$ up to equivalence,
and 
there are precisely two marked strongly invertible knots
$(K,h_{q/p},\delta)$ with $\delta\in\{\tau_{q/p},\tau^c_{q/p}\}$
associated with $K$.

{\rm (2)} Suppose $q$ is an even integer such that $1<|q|<p$, i.e., 
$K$ is a hyperbolic~knot.
\begin{enumerate}
\item[{\rm (i)}]
Suppose $q^2\not\equiv 1 \pmod{p}$,
then $K$ has precisely two strong inversions $h_{q/p}$ and $h_{q'/p}$ 
up to equivalence,
where $q'$ is the unique even integer such that $qq'\equiv 1 \pmod{p}$ and $0<|q'|<p$.
Up to equivalence, there are precisely 
four marked strongly invertible knots associated with $K$,
as listed below.
\begin{align*}
(K,h_{q/p},\delta), & \quad \delta\in\{\tau_{q/p},\tau^c_{q/p}\},\\
(K,h_{q'/p},\delta), & \quad \delta\in\{\tau_{q'/p},\tau^c_{q'/p}\}
\end{align*}
\item[{\rm (ii)}]
Suppose $q^2\equiv 1 \pmod{p}$,
then $K$ has precisely two strong inversions 
$h_{q/p}$ and $h'_{q/p}$
up to equivalence.
Up to equivalence, there are precisely four 
marked strongly invertible knots associated with $K$,
as listed below.
\begin{align*}
(K,h_{q/p},\delta), & \quad \delta\in\{\tau_{q/p},\tau^c_{q/p}\},\\
(K,h'_{q/p},\delta), & \quad \delta\in\{\rho_{q/p},\rho^c_{q/p}\}
\end{align*}
\end{enumerate}
\end{proposition}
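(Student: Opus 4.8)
The statement assembles the constructions carried out above, so the plan is to organize the marked knots via the map that forgets the marking arc. Writing $\Phi$ for the induced map from equivalence classes of marked strongly invertible knots associated with $K$ to equivalence classes of strong inversions of $K$, I would first check that $\Phi$ is well defined (an equivalence of marked knots is in particular an equivalence of the underlying strongly invertible knots) and that each of its nonempty fibers has exactly two elements. The fiber over a class $[h]$ has at most two elements because, after conjugating so that the underlying inversion is $h$, any marking arc of $\Fix(h)\cong S^1$ is one of the two subarcs determined by $\Fix(h)\cap K\cong S^0$; it has at least two elements because these two markings are inequivalent by \cite[Proposition 2.2(2)]{HHS1}, using that $K(q/p)$ has cyclic period $2$. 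Since $\Phi$ is clearly surjective, the number of marked knots equals twice the number of strong inversions up to equivalence.

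It therefore remains to enumerate the strong inversions. In case (2), $K$ is hyperbolic, so by Mostow--Prasad rigidity together with the Gordon--Luecke theorem its symmetries are realized by $\Isom(S^3\setminus K)$, and the strong inversions are the orientation-preserving, $K$-reversing involutions therein. When $q^2\not\equiv 1 \pmod p$ one has $\Isom^+(S^3\setminus K)\cong\ZZ/2\ZZ\oplus\ZZ/2\ZZ$, and, as recorded in \cite[Proposition 3.6]{Sakuma1986}, its strong inversions are exactly the two inequivalent maps $h_{q/p}$ and $h_{q'/p}$. When $q^2\equiv 1 \pmod p$ one has $\Isom^+(S^3\setminus K)\cong D_8$; here $h_{q'/p}$ agrees with $h_{q/p}$ up to equivalence (indeed $q'=q$), and the palindromic continued fraction exhibits exactly one further inequivalent inversion $h'_{q/p}$. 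In case (1), $\pm q=p-1$ forces $q^2\equiv 1 \pmod p$, hence $q'=q$ and $h_{q'/p}$ agrees with $h_{q/p}$; moreover $K=T(2,p)$ is Seifert fibered, and the analysis of its symmetry group leaves $h_{q/p}$ as the only strong inversion up to equivalence.

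Feeding this enumeration into the fiber count completes the proof. In case (1) the single strong inversion yields the two inequivalent marked knots $(K,h_{q/p},\tau_{q/p})$ and $(K,h_{q/p},\tau^c_{q/p})$, a total of two. In case (2) the two inequivalent strong inversions each yield two inequivalent markings, giving the four knots listed in (i) and (ii), and no two of them coincide, since any two either lie in distinct fibers of $\Phi$ or are the two distinct markings within a single fiber. These totals, two and four, agree with Proposition \ref{prop:2-bridge-marked-strongly-invertible-knots_0}, which confirms that the listed marked knots exhaust all of them.

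The crux is the enumeration of strong inversions in case (2)(ii): one must verify at once that $h_{q/p}$ and $h_{q'/p}$ fuse into a single equivalence class, so as not to overcount, and that $h'_{q/p}$ constitutes a genuinely distinct class, so that the four listed marked knots remain pairwise inequivalent. Both points depend on the precise structure of $\Isom^+(S^3\setminus K)\cong D_8$ and on deciding which of its involutions reverse the orientation of $K$, which is exactly the content extracted from \cite[Proposition 3.6 and its proof]{Sakuma1986}.
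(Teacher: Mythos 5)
Your proposal is correct and takes essentially the same route as the paper: the paper likewise distinguishes the two markings of each inversion via \cite[Proposition 2.2(2)]{HHS1} together with the cyclic period $2$ of $K(q/p)$, enumerates the strong inversions through the structure of $\Isom^+(S^3\setminus K)$ ($\ZZ/2\ZZ\oplus\ZZ/2\ZZ$ or $D_8$) as recorded in \cite[Proposition 3.6 and its proof]{Sakuma1986}, and checks the result against the count in Proposition \ref{prop:2-bridge-marked-strongly-invertible-knots_0}. Your forgetful-map packaging and your Seifert-fibered symmetry argument in the torus case (where the paper instead gets uniqueness of the inversion from the count of two marked knots) are only cosmetic differences.
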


\begin{remark}
\label{rem:strong-equivalence}
{\rm In the case where  
$q^2\equiv 1 \pmod{p}$,
the number of the strong equivalence classes of strong inversions  
is bigger than the number $2$ of the equivalence classes of strong inversions.
Here two strong inversions are said to be {\it strongly equivalent}
if they are conjugate by a diffeomorphism $\varphi$ of $(S^3,K)$
which is pairwise isotopic to the identity.
In fact, in this case, there are $4$ strong inversions up to strong equivalence,
which are distinguished by their images in $Sym(S^3,K(q/p))\cong D_8$.
}
\end{remark}

Theorem \ref{thm:main} below is the main theorem of this paper which
completely determines the equivariant genera of 
marked strongly invertible knots for $2$-bridge knots.
Construction of invariant Seifert surfaces and 
lower bounds for equivariant genera are given in Sections \ref{sec:construction} and \ref{sec:proof}, respectively.
Corollaries \ref{cor:gap} and \ref{cor:gap2} obviously follow from Theorem \ref{thm:main}(2).

\begin{theorem}\label{thm:main}
Let $K=K(q/p)$ , $1\le |q|<p$
be a $2$-bridge knot with $g(K)=n$ denoted by
$K[2a_1,2b_1,\dots,2a_n,2b_n]$,
and let $q'/p=[2b_n,2a_n,\dots,2b_1,2a_1]$.

(1) Suppose $\pm q=p-1$, i.e., $K$ is a torus knot.
Then 
$K$ has two equivariant genera, which coincide with $g(K)$.
$$g(K, h,\delta)=g(K, h,\delta^c)=g(K)$$

(2) Suppose  $\pm q\neq p-1$, i.e., $K$ is a hyperbolic knot.

(2.1) If $q^2 \not\equiv 1 \pmod p$, 
$K$ has four equivariant genera as follows:
$$
\begin{array}{ccl}
g(K, h_{q/p},\tau_{q/p})&=&\sum_{i=1}^n |a_i| =g(K)+\sum_{i=1}^n (|a_i| -1)\\
g(K, h_{q/p},\tau_{q/p}^c)&=&g(K)+\#\{i |\  |b_i|>1\}=2g(K)-\#\{i |\  |b_i|=1\}\\
g(K, h_{q'/p},\tau_{q'/p})&=&\sum_{i=1}^n |b_i| =g(K)+\sum_{i=1}^n (|b_i| -1)\\
g(K, h_{q'/p},\tau_{q'/p}^c)&=&g(K)+\#\{i |\  |a_i|>1\}=2g(K)-\#\{i |\  |a_i|=1\}
\end{array}
$$

(2.2) If $q^2 \equiv 1 \pmod p$ (i.e., 
$[2a_1,2b_1,\dots,2a_n,2b_n]$ is palindromic),
then
$K$ has four equivariant genera as follows:
$$
\begin{array}{ccl}
g(K, h_{q/p},\tau_{q/p})&=&\sum_{i=1}^n |a_i| =g(K)+\sum_{i=1}^n (|a_i| -1)\\
g(K, h_{q/p},\tau_{q/p}^c)&=&g(K)+\#\{i |\  |b_i|>1\}=2g(K)-\#\{i |\  |b_i|=1\}\\
g(K, h'_{q/p},\delta)&=&g(K)\\
g(K, h'_{q/p},\delta^c)&=&g(K)
\end{array}
$$

\end{theorem}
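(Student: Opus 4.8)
The plan is to prove, for each of the marked types, the two inequalities $g(K,h,\delta)\le B$ and $g(K,h,\delta)\ge B$ separately, where $B$ is the value claimed in the statement; the upper bounds will come from exhibiting explicit invariant Seifert surfaces (Section \ref{sec:construction}) and the lower bounds from an equivariant sutured-manifold argument (Section \ref{sec:proof}). I would dispose of case (1) first and on its own: a torus knot $T(2,p)=K[2,2,\dots,2]$ is fibered, so by \cite[Proposition 1.5]{HHS1} its equivariant genus is realized by a minimal genus Seifert surface, forcing $g(K,h,\delta)=g(K,h,\delta^c)=g(K)$; the only point to check is that the fiber can be arranged to be $h$-invariant and to meet $\Fix(h)$ in each of the two arcs. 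For case (2.1) I would also record at the outset the duality that halves the work: since $K(q'/p)=K[2b_n,2a_n,\dots,2b_1,2a_1]$ is $K(q/p)$ with the roles of the $a_i$ and $b_i$ interchanged, the two formulas for $h_{q'/p}$ follow from those for $h_{q/p}$ by the substitution $a_i\leftrightarrow b_i$, so it suffices to establish the first two displayed lines.

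For the upper bounds I would start from the standard genus-$n$ Seifert surface $F$ for $K(q/p)$ realized as a linear plumbing of $2n$ unknotted twisted annuli, the $i$-th pair carrying $2a_i$ and $2b_i$ half-twists. Theorem \ref{thm:disjoint-Seifert} guarantees that $F$ and $h(F)$ have disjoint interiors, and Theorem \ref{thm:disjoint-Seifert2} then lets me search for the optimal invariant surface $S$ in the region between $F$ and $h(F)$. The construction symmetrizes the plumbing: the chosen arc $\delta$ forces the annuli it threads to be doubled, and I expect an $a$-annulus carrying $2a_i$ half-twists to contribute $|a_i|$ to the genus (yielding $\sum_{i=1}^n|a_i|$ for $(h_{q/p},\tau_{q/p})$), whereas for the complementary arc the $b$-annuli can be absorbed more economically, each $|b_i|>1$ costing only one extra handle (yielding $g(K)+\#\{i:|b_i|>1\}$). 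In the palindromic case (2.2) the symmetric diagram of Figure \ref{fig:palindromic} already displays the linear-plumbing surface as $h'_{q/p}$-invariant, meeting $\Fix(h'_{q/p})$ in $\rho_{q/p}$, so a surface realizing $g(K)$ is immediate for $h'_{q/p}$ and both arcs inherit it.

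For the lower bounds I would pass to the quotient of $(S^3,K)$ by $h$, topologically again $S^3$, in which $\Fix(h)$ descends to an unknotted circle and $K$ to a knotted arc; an invariant Seifert surface $S$ descends to a surface $\bar S$ whose boundary is the union of the image arc of $K$ and the image arc $\bar\delta$ of $\delta$, and whose complexity controls $g(S)$. I would then equip the complement with an $h$-invariant sutured manifold structure, with the sutures recording $\delta$, and run a Gabai-style sutured manifold decomposition (equivalently, in the quotient) to certify tautness and thereby bound the relevant Thurston norm, hence the complexity of $\bar S$, from below by $B$. Matching this bound to the upper bound across all four marked types is where the bookkeeping must be organized carefully, since the two arcs and the two inversions give genuinely distinct sutured pictures.

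The hardest part will be the lower bound, and specifically verifying that the chosen invariant sutured manifold is taut and admits a decomposition whose complexity equals the claimed value: the decomposition must be engineered so that the norm reproduces $\sum_{i=1}^n|a_i|$, $g(K)+\#\{i:|b_i|>1\}$, and their duals, rather than merely $g(K)$, which requires tracking how the arc $\delta$ obstructs the compressions that are available in the non-equivariant setting. By contrast the $h'_{q/p}$ lines of (2.2) are easy, since there the lower bound is just the ordinary genus inequality $g(K,h,\delta)\ge g(K)$; the $h_{q/p}$ lines of (2.2), however, demand exactly the same delicate analysis as case (2.1).
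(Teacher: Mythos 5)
Your overall architecture (upper bounds by explicit invariant surfaces, lower bounds by an equivariant sutured-manifold argument, case (1) by fiberedness via \cite[Proposition 1.5]{HHS1}, and the $a_i\leftrightarrow b_i$ duality for $h_{q'/p}$) matches the paper, but your lower-bound plan has a genuine gap. You propose to pass to the quotient of $(S^3,K)$ by $h$ and certify tautness of a sutured structure there, so that a Thurston-norm bound gives the claimed values. Two problems. First, the quotient of an invariant Seifert surface is in general \emph{non-orientable} (the paper's Lemma \ref{lem:estimate-Betti} exploits exactly this: when $|a_i|=1$ the quotient piece must be non-orientable because its boundary represents $2[S^1]\ne 0$ in the solid torus), so Gabai's oriented sutured-manifold and Thurston-norm machinery does not apply directly to $\bar S$. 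Second, and more fundamentally, tautness certifies that a surface minimizes the \emph{ordinary} norm in its class, and for these knots the ordinary bound is just $g(K)$; the targets $\sum_{i=1}^n|a_i|$ and $2g(K)-\#\{i:|b_i|=1\}$ strictly exceed $g(K)$ in general, so no engineering of a taut decomposition can reproduce them unless the equivariance constraint itself enters the estimate --- your proposal names this difficulty (``tracking how the arc $\delta$ obstructs compressions'') but supplies no mechanism for it.

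The paper's mechanism is different and is the real content of Section \ref{sec:proof}. Theorem \ref{thm:disjoint-Seifert2} --- which you invoke only in your upper-bound discussion, where it does no work --- is used to \emph{confine} an invariant Seifert surface of minimal equivariant genus inside the $h_{q/p}$-invariant sutured manifold $(M,\gamma)$ between the two exchanged minimal genus Seifert surfaces $R_{\pm}(\gamma)$ (short arc), respectively inside its complement $(N,\gamma)$ (long arc). One then cuts equivariantly along $2n-1$ product disks, so the invariant surface decomposes into pieces $F_i$, one in each local model $M_i$ (a twisted-annulus complement) or $N_i$, and the bound comes from estimating $\beta_1(F_i)$ directly: in the quotient solid torus one proves $\beta_1(\check F_i)\ge|a_i|$ by induction using cut-and-paste along meridian disks (the base case forced by non-orientability), then lifts via $\chi(F_i)=2\chi(\check F_i)$ to get $\beta_1(F_i)\ge 2|a_i|-1$; for the long arc one uses instead that $\partial\check F_i$ is a torus link $T(2,b_i)$, giving $\beta_1(F_i)\ge 3$ when $|b_i|>1$. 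These local, quotient-level Betti-number estimates (Lemmas \ref{lem:estimate-Betti} and \ref{lem:estimate-Betti2}) are what replace your tautness step, and without them (or an equivalent device that genuinely uses the involution) the lower bound does not go through. Your upper bounds are also only sketched --- in particular achieving $g(K)+\#\{i:|b_i|>1\}$ rather than $2g(K)$ requires the special local construction at entries $b_i=\pm1$ (Figure \ref{fig:b-odd}), and the short-arc surface in the paper comes from smoothing the clasp singularities of $D\cup h(D)$ rather than from symmetrizing a plumbing --- but that is a matter of detail, not of approach.
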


\begin{remark}
\label{rem:main-torus}
The formulae in (2) hold even when $\pm q=p-1$.
In fact, if $\pm q=p-1$, then $|a_i|=|b_i|=1$ for all $i$ and
all quantities in the formulae in (2) become $g(K)$.
From (2.1), we see that there can be an arbitrarily large gap
between the genus and the equivariant genus if we use the
short invariant arc, but the gap can be at most $g(K)$ if we
use the long invariant arc.
\end{remark}

\begin{proof}[Proof of Theorem \ref{thm:main}]
(1) In this case, $K$ is a fibered knot, and the conclusion is given by
\cite[Proposition 1.5]{HHS1}.
Also, our construction of invariant Seifert surfaces 
in Section \ref{sec:construction}
yields the invariant fiber surfaces 
(whose genera are of course equal to $g(K)$),
for each choice of an invariant subarc of $\Fix(h)$.

(2)  
In Section \ref{sec:construction}, we construct an invariant Seifert surface
 for a given $(K(q/p), h, \tau)$. In Section  \ref{sec:proof},
 we give a lower bound for $g(K(q/p), h, \tau)$, which is
 equal to the genus of the Seifert surface constructed above.
 \end{proof}

\section{Construction of minimal genus invariant Seifert surfaces}
\label{sec:construction}

In this section, we construct invariant Seifert surfaces for
marked strongly invertible $2$-bridge knots $(K,h,\delta)$.
Some of them realizes $g(K)$, in which case we immediately see
$g(K,h,\delta)=g(K)$.
The rest of them having greater genera than $g(K)$ are proved
to realize $g(K,h,\delta)$ by the lower bounds given in Section \ref{sec:proof}.

\subsection{An invariant Seifert surface containing the short arc $\tau_{q/p}$}
Let $K=K(q/p)=K[2a_1,2b_1, \cdots, 2a_n, 2b_n]$,
$h=h_{q/p}$, $\tau=\tau_{q/p}$, and
let $k$ be one of the two subarcs of $K$ bounded by $K\cap \tau$.
Then $k\cup \tau$ spans an embedded disk 
$D$ (Figure \ref{fig:claspdisk}(1)),
and $D \cup h(D)$ is an immersed disk with clasp singularities  
spanned by $K$ (Figure \ref{fig:claspdisk}(2)).
Then by smoothing the singularities, 
we obtain an invariant Seifert surface $F$ for $(K, h, \tau)$
as in Figure \ref{fig:claspdisk}(3).
Since the number of the clasp singularities is $\sum_{i=1}^n |a_i|$, 
we have the following equalities:

$$g(F)=\sum_{i=1}^n |a_i| = g(K)+\sum_{i=1}^n (|a_i| -1)$$

\figer{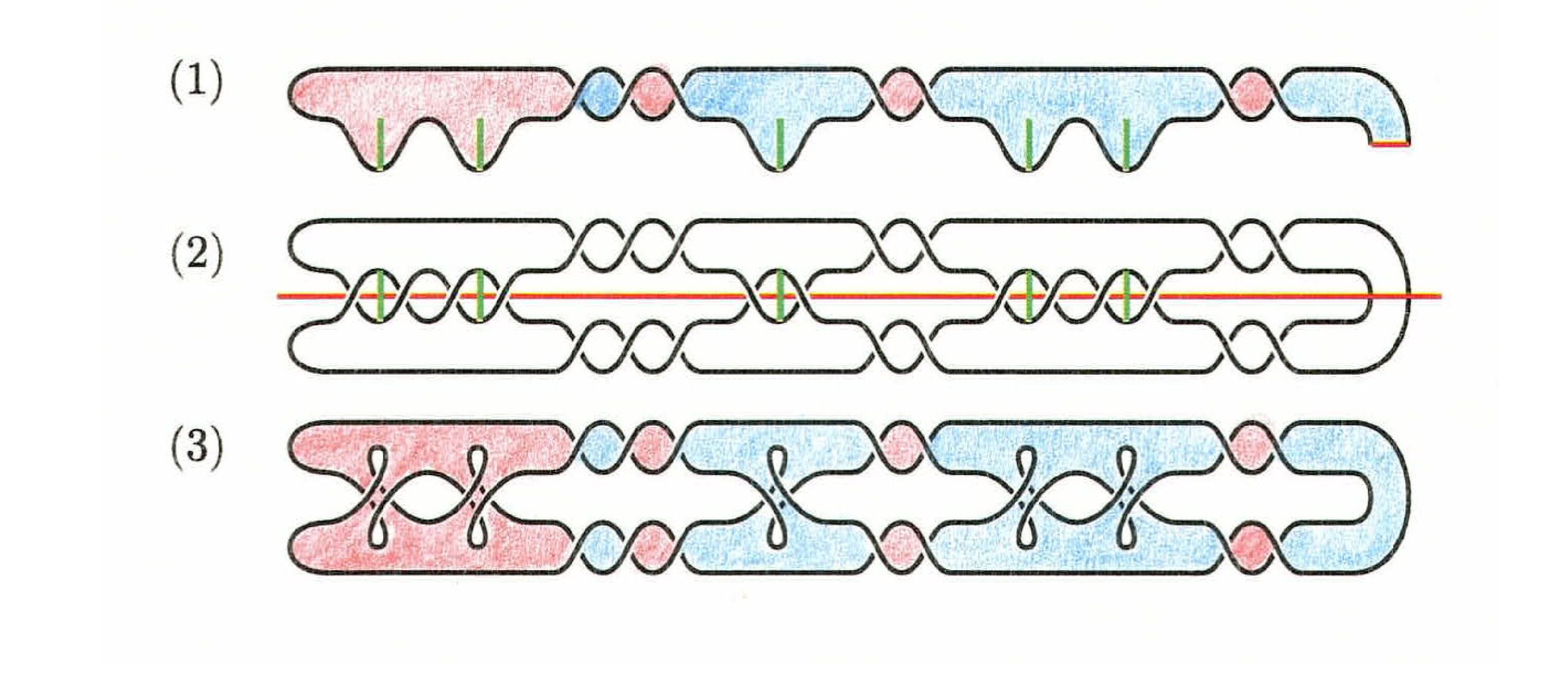}{1}{fig:claspdisk}{An invariant Seifert surface
for the short arc $\tau$}

\begin{remark}
In \cite{HHS1} cf. \cite{Hiura}, we gave 
an algorithm to construct an invariant Seifert surface
from a diagram where the involution axis lies in the 
projection plane. 
The surface in Figure \ref{fig:claspdisk} can be also obtained
by that method.
\end{remark}

\subsection{An invariant Seifert surface containing the long arc $\tau^c_{q/p}$}
We have two cases according to whether all $b_i$'s are even or not.

First, suppose that all $b_i$'s are even.
Then we span an invariant Seifert surface of genus 
$2n=2g(K)$ as in Figure \ref{fig:longarc}, which is constructed as follows:
First take a disk with $n$ holes.
Then for $i=1, 2, \dots, n$
add a band with $a_i$ twists to the $i^{\rm th}$ hole
and plumb a pair of
unknotted annuli with $b_{i}/2$ twists in a symmetric manner.
In \cite[Section 4]{HHS1} (cf.\,\cite{Hiura}) we 
proved that some of these surfaces realize the equivariant genera and 
showed that there can be
arbitrarily large gap between 
the genus and the equivariant genus.
The following is the completion of construction of
invariant Seifert surfaces for all marked $2$-bridge knots.
Next, if some $b_i$ is odd (Figure \ref{fig:b-odd}(1)), 
we twist the diagram (fixing the left end) at each site where
$b_i$ is odd as in Figure \ref{fig:b-odd}(2).
Then each band has an even number of twists.
Note that there are two ways in each twisting.
In particular, when some $b_i=\pm 1$, we twist the diagram 
at that site so that the resulting band has no twist at all.
As a conclusion, we can span an invariant Seifert surface $F$
as in Figure \ref{fig:b-odd}(3) 
according to if
$b_i$ is even, odd other than $\pm 1$, or  $\pm 1$.
Figure \ref{fig:b-odd} depicts the case where 
$(b_1,b_2,b_3,b_4,b_5)=(2,5,-2,-1,-2)$.
For the entries $b_1=2, b_3=-2$ and $b_5=-2$, 
we build bands
as in Figure \ref{fig:longarc}(2), and for the entry $b_2=5$ 
we build bands in a twisted way, and for the entry $b_4=-1$
we locally span the surface in a special way.
The genus of that surface $F$ is as follows:

$$g(F)=2g(K)-\#\{i \ \vert\  b_i=\pm 1\}=g(K)+\#\{i \ \vert\  |b_i|>1\}$$

\figer{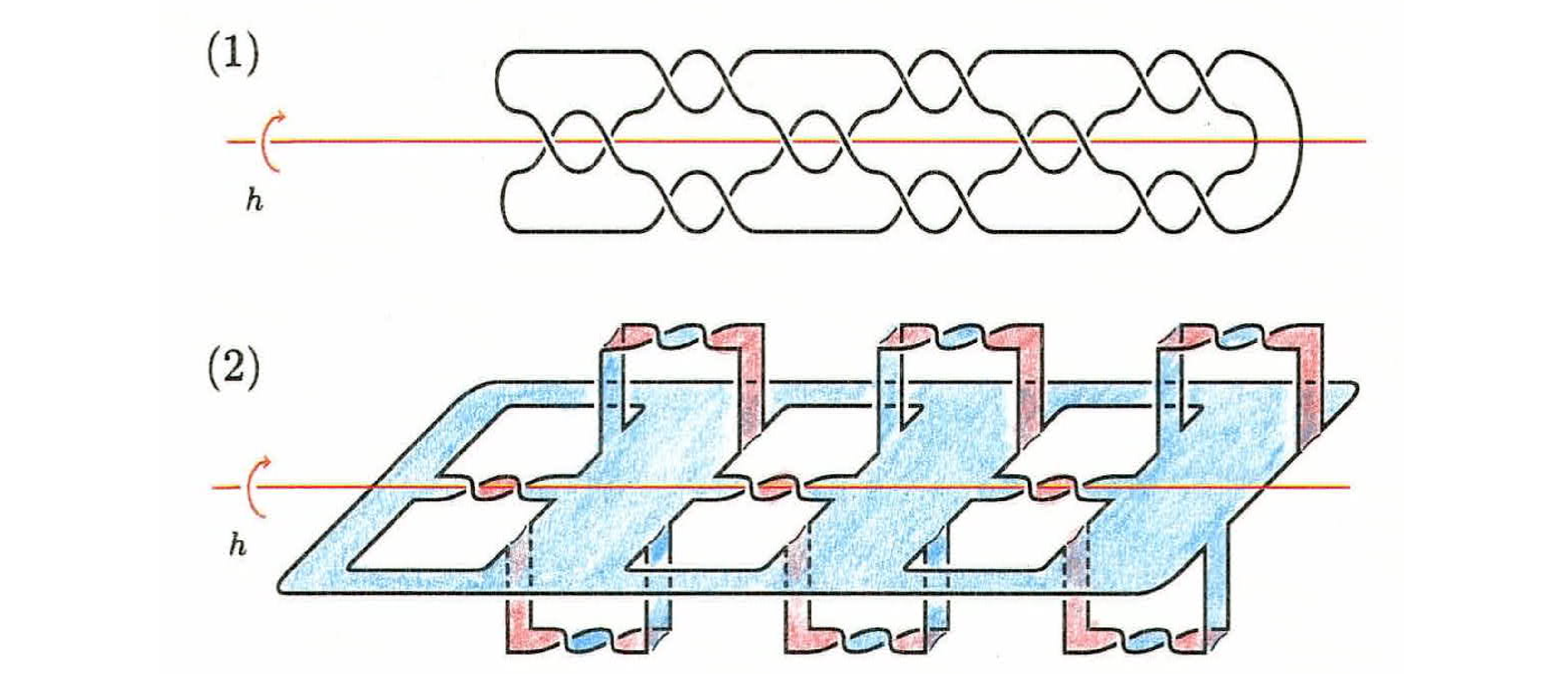}{1.1}{fig:longarc}{An invariant Seifert surface
for the long arc $\tau^c$}
\figer{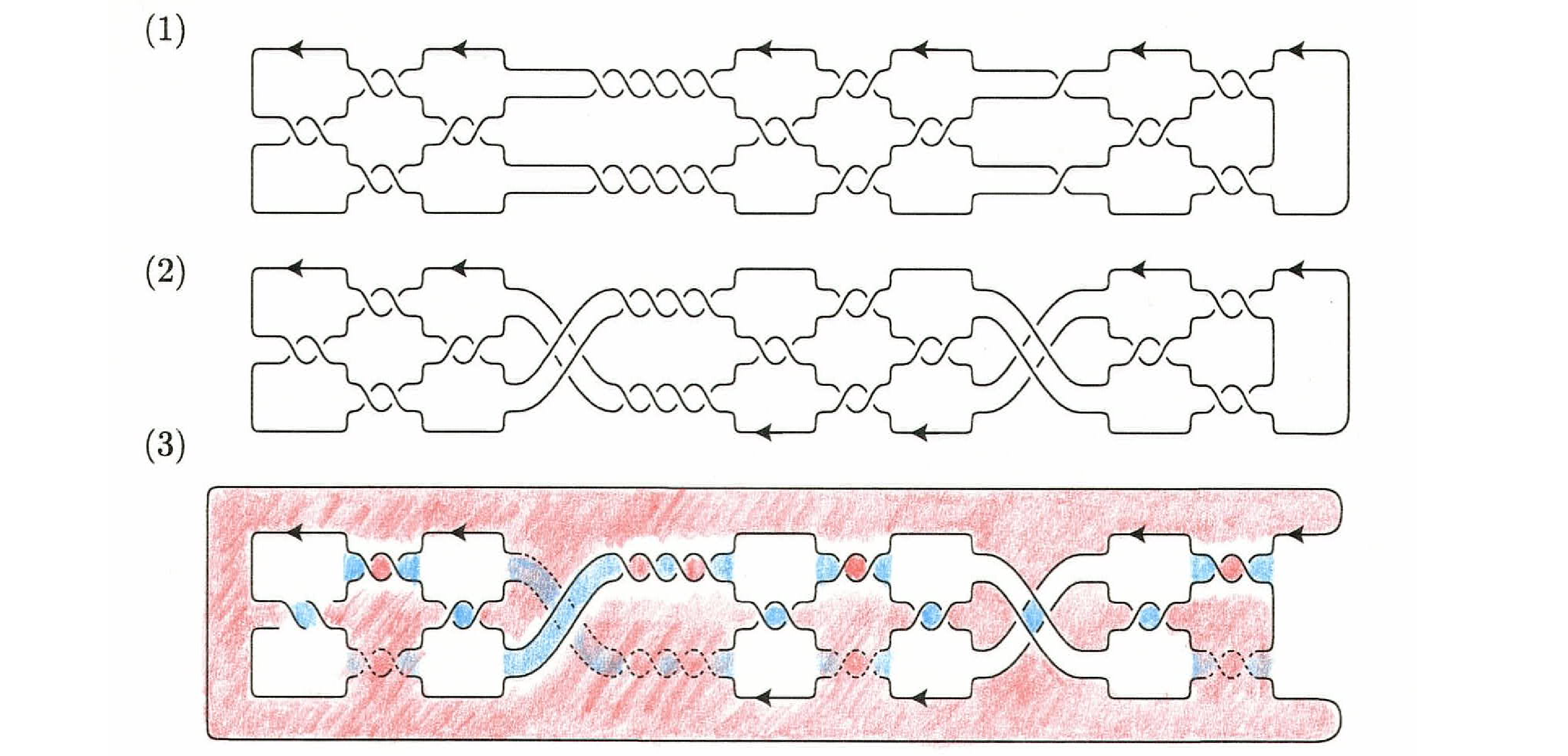}{1}{fig:b-odd}{Modification when some $b_i$ is odd}

\subsection{The exceptional case}
When $q^2\equiv 1 \pmod p$, $q/p$ has a palindromic
continued fraction expansion.
As described in Section \ref{sec:statement-result}, 
the exceptional strong inversion  $h'_{q/p}$ comes from the palindromicity, 
with the short and long vertical arcs 
$\rho_{q/p}$ and $\rho^c_{q/p}$.
Note that $K(q/p)$ has a minimal genus Seifert surface which is a linear
plumbing of unknotted twisted annuli.
From Figure \ref{fig:palindromicsurf}, 
we immediately see that $(K(q/p),h'_{q/p}, \rho_{q/p})$ and
$(K(q/p),h'_{q/p}, \rho^c_{q/p})$ have minimal genus
Seifert surfaces which are $h'_{q/p}$-invariant.

\figer{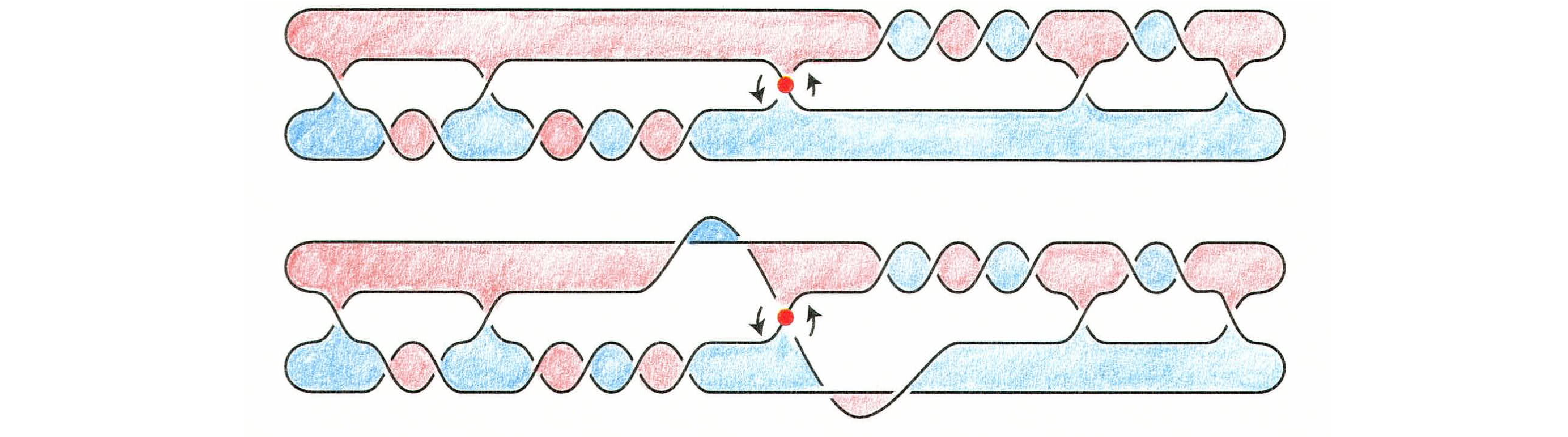}{1}{fig:palindromicsurf}{Invariant Seifert surfaces for the palindromic case}

\section{Sharp lower bounds for  equivariant genera}
\label{sec:proof} 
In this section, we give lower bounds for the equivariant genera of 
marked strongly invertible knots 
$(K(q/p), h_{q/p}, \tau_{q/p})$ and $(K(q/p), h_{q/p}, \tau_{q/p}^c)$. 
We build a sutured manifold between two minimal genus Seifert surfaces for
$K(q/p)$ which are exchanged by $h_{q/p}$, and then 
use Theorem \ref{thm:disjoint-Seifert2} to limit the existing range of 
invariant Seifert surfaces
of minimal equivariant genus for 
the marked strongly invertible knots.

\subsection{Sutured manifolds and product decompositions}
\label{subsec:sutured}
In this subsection, we first recall some of the terminology 
concerning sutured manifolds such as product decompositions.

A {\it sutured manifold} $(M,\gamma)$ 
defined by Gabai in \cite[Definition 2.6]{Gabai1983} is a
compact oriented $3$-manifold $M$ together with
a set $\gamma \subset \partial M$ of pairwise disjoint annuli $A(\gamma)$ and tori $T(\gamma)$.
In this paper, we only deal with sutured manifolds with
$\gamma$ consisting of only annuli.
Each component of $A(\gamma)$ contains a {\it suture}, i.e.,
a homologically nontrivial oriented simple closed curve.
The set of sutures is denoted by $s(\gamma)$.
Every component of $\cl(\partial M\setminus \gamma)$ is oriented,
so that the following condition is satisfied.
Let $R_+(\gamma)$ and $R_-(\gamma)$ 
be the subsurfaces of $\partial M$
whose normal vectors point out of or into $M$, respectively.
Then $\partial R_+(\gamma)$ and $\partial R_-(\gamma)$
are homologous to $s(\gamma)$ in $\gamma$.

\begin{convention}
A sutured manifold $(M,\gamma)$ is uniquely determined by 
the pair $(M,s(\gamma))$ and vice versa.
Therefore, throughout this paper,
we identity $\partial R_{\pm}(\gamma)$
with the closed up component of $\partial M\setminus s(\gamma)$,
and let $\gamma$ denote the suture $s(\gamma)$.
\end{convention}

A {\it product disk} in a sutured manifold $(M,\gamma)$ 
is a disk $\Delta$  properly embedded in $M$ 
such that $\partial \Delta \cap \gamma$ consists of 
two transversal intersection points,
i.e., $(\Delta,\Delta\cap \gamma)$ is a {\it bigon} in $(M,\gamma)$. 
Given a product disk 
$\Delta$ in $(M,\gamma)$,
we can produce a new sutured manifold $(M',\gamma')$
as illustrated in Figure \ref{fig:proddecomp}(1), 
by cutting $M$ along $\Delta$
and reconnecting the sutures naturally.
This operation is called the {\it product decomposition}
of $(M,\gamma)$ along the product disk $\Delta$.

In the rest of this section, we consider a surface $F$ embedded in
a sutured manifold $(M,\gamma)$ such that $\partial F = \gamma$
and $F\cap \Delta$ is an arc.
Then we  apply a product decomposition to  $(M, \gamma)$ cutting
$F$ along $F\cap \Delta$ (see Figure \ref{fig:proddecomp}(2)).

\figer{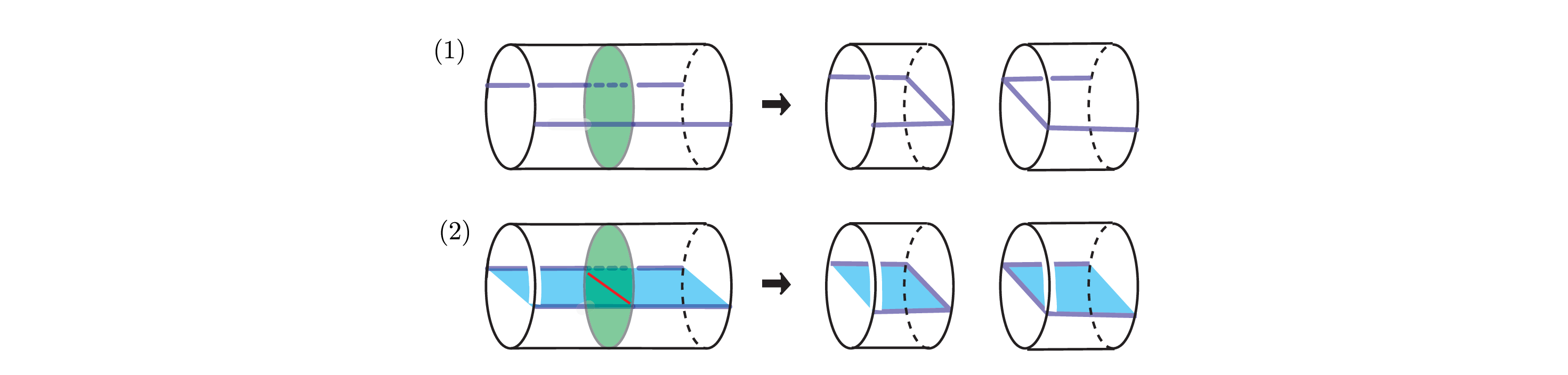}{1}{fig:proddecomp}{A product decomposition}

\subsection{Sutured manifolds between minimal genus Seifert surfaces}
\label{subsec:4.1}

As in Figure \ref{fig:sysSmfd}, we construct an 
$h$-invariant sutured manifold 
$(M, \gamma)$ embedded in $S^3$
such that $\gamma$ coincides with
$K(q/p)=$
$K[2a_1,2b_1,\cdots, 2a_n, 2b_n]$ in the form of
Figure \ref{fig:tauarcs1}(2)).
Actually, $(M, \gamma)$ is obtained as follows: 
(i) arrange $n$ $3$-balls penetrated by 
the involution axis, 
(ii)
connect each adjacent pair of balls
by a pair of $1$-handles,
(iii) attach a $1$-handle to the last ball,
(iv) drill out the balls along $\tau_{q/p}^c$, 
and finally
(v)
set up $\gamma$ which coincides with the knot $K(q/p)$.

\figer{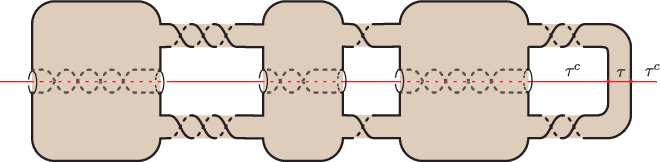}{0.7}{fig:sysSmfd}{The sutured manifold 
$(M, \gamma)$ between exchangeable minimal genus Seifert surfaces}

Note that
the subsurfaces $R_+(\gamma)$ and $R_-(\gamma)$ of $\partial M$
are minimal genus Seifert surfaces for $K(q/p)$ which are exchanged by the involution $h_{q/p}$.
(We will see in Theorem \ref{thm:Action-Kakimizu}(1-ii) that
this is 
the only mutually disjoint pair of minimal genus Seifert surfaces
that are exchanged by $h_{q/p}$.)

\subsection{Lowest genera of invariant Seifert surfaces for 
$(K(q/p), h_{q/p}, \tau_{q/p})$}
\label{subsec:estimate1}

As in Figure \ref{fig:cpd-in},  take $2n-1$ product disks for 
$(M, \gamma)$,
among which one is invariant under $h_{q/p}$ and the other 
$2n-2$ are
paired into those exchanged by $h_{q/p}$.
Apply $2n-1$ product decompositions to cut the $1$-handles so
that $(M, \gamma)$ is decomposed into $n$ sutured manifolds 
$(M_i,\gamma_i)$,
each of which is
the complementary sutured manifold of an unknotted annulus
with $a_i$ full-twists ($i=1,2,\dots, n$).
Each piece $(M_i, \gamma_i)$ appears as in 
Figure \ref{fig:surf-tau1}(1).

\figer{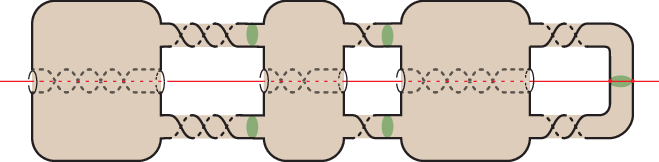}{0.7}{fig:cpd-in}{Product disks for 
$(M, \gamma)$ containing $\tau_{q/p}$}

\figer{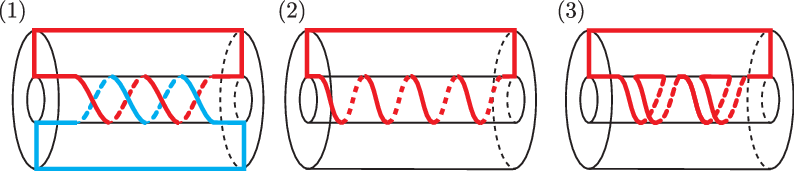}{0.7}{fig:surf-tau1}{
The sutured manifold $(M_i, \gamma_i)$ and its quotient
by $h_{q/p}$}

Since $\tau_{q/p} \subset M$, 
we see by Theorem \ref{thm:disjoint-Seifert2}  
that there is 
an invariant Seifert surface $F_\tau$ 
of minimal equivariant genus for
$(K(q/p), h_{q/p}, \tau_{q/p})$
contained in $(M,\gamma)$
so that  $\partial F_\tau=\gamma$ and 
$h_{q/p}(F_{\tau})=F_{\tau}$.
Let $\Delta$ be any of the  product disk taken above.
Then $F_{\tau} \cap \Delta$ contains an arc 
which is properly embedded both in $F_{\tau}$ and $\Delta$.
If $F_{\tau} \cap \Delta$ contains a loop, 
we can remove the innermost of such loops by
an $h_{q/p}$-equivariant surgery 
(in fact an $h_{q/p}$-equivariant isotopy), and therefore, 
we may assume that $F_{\tau}$ intersects each
product disk in an arc.
Hence, 
through the product decompositions above,
$F_{\tau}$ is cut into $F_i$'s,
where $F_i:=F_{\tau}\cap M_i$
is an $h_{q/p}$-invariant
orientable surface properly embedded in 
$M_i$ such that $\partial F_i=\gamma_i$
($1\le i\le n$).

\begin{lemma}
\label{lem:estimate-Betti}
We have $\beta_1(F_i)\ge 2|a_i|-1$.
\end{lemma}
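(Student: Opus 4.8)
The plan is to bound $\beta_1(F_i)$ from below by controlling the Euler characteristic $\chi(F_i)$, and to compute $\chi(F_i)$ by passing to the quotient under $h_{q/p}$, as suggested by the right-hand side of Figure~\ref{fig:surf-tau1}. Since $\beta_1(F_i)=\beta_0(F_i)-\chi(F_i)\ge 1-\chi(F_i)$, it suffices to prove the Euler-characteristic estimate $\chi(F_i)\le 2-2|a_i|$; the asserted bound $\beta_1(F_i)\ge 2|a_i|-1$ then follows at once, and this reformulation has the advantage of being insensitive to whether $F_i$ is connected. I would first record the topology of $(M_i,\gamma_i)$: as the complementary sutured manifold of an unknotted $a_i$-twisted annulus $A_i$, the piece $M_i$ is a solid torus and the suture $\gamma_i=\partial A_i$ lies on $\partial M_i$ as the $(2,2a_i)$-torus link, so that any admissible $F_i$ is a spanning surface for $\gamma_i$ inside this solid torus.

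Next I would carry out the Euler-characteristic transfer to the quotient. The surface $F_i$ is invariant under $h_{q/p}$, which restricts to an involution of $F_i$ whose fixed-point set is $\Fix(h_{q/p})\cap F_i=\tau_{q/p}\cap M_i$, a disjoint union of arcs properly embedded in $F_i$; near each such arc $h_{q/p}$ acts as a reflection, so $h_{q/p}$ reverses the orientation of $F_i$ and $F_i$ is the double of the quotient $\bar F_i:=F_i/h_{q/p}$ along these arcs. Reading off from Figure~\ref{fig:surf-tau1} that there are exactly two such fixed arcs (this parity is precisely what makes the odd bound $2|a_i|-1$ sharp), the doubling formula gives $\chi(F_i)=2\chi(\bar F_i)-2$. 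Hence the estimate to be proved reduces to the downstairs inequality $\chi(\bar F_i)\le 2-|a_i|$ for the quotient surface $\bar F_i$, which is properly embedded in the quotient sutured manifold $(\bar M_i,\bar\gamma_i)=(M_i,\gamma_i)/h_{q/p}$ and spans the image $\bar\gamma_i$ of the suture.

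The heart of the argument, and the step I expect to be the main obstacle, is this downstairs bound. The plan is to identify $(\bar M_i,\bar\gamma_i)$ explicitly from Figure~\ref{fig:surf-tau1}: I expect $\bar M_i$ to be a ball, with $M_i$ its double branched cover along the image of the fixed axis, and $\bar\gamma_i$ the image of the $(2,2a_i)$-torus link, recognizable as a torus-knot-type curve all of whose spanning surfaces have Euler characteristic at most $2-|a_i|$. The real difficulty is that $\bar F_i$ is a priori arbitrary — possibly compressible or disconnected — so the inequality must encode genuine Thurston-norm information about $(\bar M_i,\bar\gamma_i)$ rather than a computation for one preferred surface. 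I would secure it through Gabai's sutured manifold theory: the product decompositions that cut $(M,\gamma)$ into the pieces $(M_i,\gamma_i)$ are $h_{q/p}$-equivariant and descend to product decompositions of $(\bar M_i,\bar\gamma_i)$, exhibiting the latter as taut, whence the complexity of any spanning surface is bounded below by the sutured Thurston norm, which one then computes to equal $|a_i|-1$. One could alternatively argue entirely upstairs by computing the Thurston norm of $\gamma_i$ in the solid torus $M_i$ from its wrapping number, but the quotient route is cleaner and matches Figure~\ref{fig:surf-tau1}. Combining $\chi(\bar F_i)\le 2-|a_i|$ with $\chi(F_i)=2\chi(\bar F_i)-2$ yields $\chi(F_i)\le 2-2|a_i|$, and therefore $\beta_1(F_i)\ge 2|a_i|-1$.
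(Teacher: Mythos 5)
Your argument rests on the claim that $h_{q/p}$ acts on $F_i$ with exactly two fixed arcs, so that the quotient $\bar M_i$ is a ball (a branched double quotient) and $\chi(F_i)=2\chi(\bar F_i)-2$. That is the geometry of the \emph{other} estimate (Lemma \ref{lem:estimate-Betti2}, the $\tau_{q/p}^c$ case, pieces $N_i$), not of this one. Here the short arc $\tau_{q/p}$ lies entirely inside the unique $h_{q/p}$-invariant product disk along which $(M,\gamma)$ is decomposed: its endpoints are the two points of $\Fix(h_{q/p})\cap K$, which are exactly the two points where that product disk meets the suture, and $F_{\tau}$ meets that disk precisely in $\tau_{q/p}$. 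Consequently, after the product decompositions, $h_{q/p}$ acts \emph{freely} on each piece $M_i$ and on each $F_i$; the quotient $\check M_i$ is a solid torus $S^1\times D^2$ (Figure \ref{fig:surf-tau1}(2)), not a ball, the image $\kappa$ of $\gamma_i$ represents $2|a_i|[S^1]+[\partial D^2]$ in $H_1(S^1\times \partial D^2)$, and the correct relation is $\chi(F_i)=2\chi(\check F_i)$ with no correction term. The bound you must prove downstairs is therefore $\beta_1(\check F_i)\ge |a_i|$, not $\chi(\bar F_i)\le 2-|a_i|$; your final numbers come out right only because the spurious $-2$ exactly compensates your weaker downstairs target.

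The downstairs step fails as well, independently of the misidentification. In the true quotient, $\check F_i$ is in general non-orientable (for $|a_i|=1$ this is forced, since $[\kappa]\ne 0$ in $H_1(S^1\times D^2)$), so tautness and the sutured Thurston norm, which concern oriented surfaces, do not apply; the paper instead proves $\beta_1(\check F_i)\ge |a_i|$ by a bare-hands induction on the wrapping number, doing cut-and-paste along an outermost arc of intersection with a meridian disk. Moreover, even inside your own branched-ball picture, the assertion that every spanning surface of a $(2,\cdot)$-torus-type curve has $\chi\le 2-|a_i|$ is false once non-orientable or arc-containing surfaces are allowed: a twisted band with $\beta_1=1$ spans such a curve no matter how many twists it has (this is exactly the surface $G_i$ in the Remark after Lemma \ref{lem:estimate-Betti2}, and it is the reason that lemma's bound is stuck at $3$ rather than growing with $|b_i|$). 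The growth in $|a_i|$ in the present lemma is a homological wrapping phenomenon in the solid torus, which simply does not exist in a ball. The same objection kills your proposed ``upstairs'' alternative: non-equivariant spanning surfaces for $\gamma_i$ in $M_i$ include the annulus $R_+(\gamma_i)$ pushed into the interior, so the relevant homology class has trivial norm and no non-equivariant Thurston-norm computation can detect $2|a_i|-1$. Equivariance must enter the lower bound in an essential way, and in the paper it enters precisely through the free quotient and the class of $\kappa$.
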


\begin{proof}
Consider the manifold pair $(\check{M}_i,\check{\gamma}_i)$
that is obtained as the quotient of  $(M_i,\gamma_i)$ by $h_{q/p}$ 
(see Figure \ref{fig:surf-tau1}(2)). 
We identify  $(\check{M}_i,\check{\gamma}_i)$ with
$(S^1\times D^2, \kappa)$
where
$[\kappa]=2\alpha [S^1] + [\partial D^2]\in H_1(S^1\times \partial D^2)$.
Here $\alpha=a_i$, and we assume $\alpha>0$ without loss of generality.
Denote $F_i$ by $F$, and 
let $\check{F}=F/h_{q/p}$ be the image of $F$
in the quotient space $\check{M}_i=S^1\times D^2$.
Then $\check{F}$ is a compact surface properly embedded in 
$S^1\times D^2$ with $\partial \check{F} = \kappa$.
We show by induction on $\alpha$ that $\beta_1(\check{F})\ge \alpha$.
If $\alpha=1$, then $[\kappa]=2[S^1]\ne 0$ in $H_1(S^1\times D^2)$.
Hence $\check{F}$ is non-orientable, 
and so $\beta_1(\check{F})\ge 1=\alpha$.
Suppose $\alpha\ge 2$. We may assume $\check{F}$ intersects 
a meridian disk $D^2$ of $S^1\times D^2$ transversely.
By cut and paste operation, 
we may assume without increasing $\beta_1(\check{F})$
that $\check{F}\cap D^2$ is a non-empty union of disjoint arcs.
Pick an outermost component of $\check{F}\cap D^2$ in $D^2$,
and consider the surface $\check{F}'$ obtained from $\check{F}$ 
by cut and paste operation
along the outermost disk in $D^2$ bounded by the component.
Observe that $\partial \check{F}'$ is a simple loop
whose homology class (with a suitable orientation)
in $H_1(S^1\times \partial D^2)$ is  
$2(\alpha-1) [S^1] + [\partial D^2]$.
Hence, we have $\beta_1(\check{F}')\ge \alpha-1$ 
by the inductive hypothesis, and so
$\beta_1(\check{F})=\beta_1(\check{F}')+1\ge \alpha$.
On the other hand, 
since $F$ is a double covering of $\check{F}$, 
we have $1-\beta_1(F)=\chi(F) = 2\chi(\check{F})=2(1-\beta_1(\check{F}))$.
Hence we have 
$\beta_1(F)=2\beta_1(\check{F})-1\ge 2\alpha-1$.
\end{proof}

\begin{remark}
We can isotope Figure \ref{fig:surf-tau1}(2) so that it appears 
as in Figure \ref{fig:surf-tau1}(3).
Then we can see a (non-orientable) surface in 
$\check{M_i}$ bounded by $\check{\gamma}$
that is obtained from the vertical, square-shaped disk
by attaching $\alpha =|a_i|$ bands.
This surface has the first Betti number $\alpha$
and its inverse image in $M_i$ is an $h_{q/p}$-invariant
orientable surface bounded by $\gamma_i$
with the first Betti number $2 |a_i|-1$.
Thus  the inequality in Lemma \ref{lem:estimate-Betti}
is promoted to an equality. 
\end{remark}

Since $F_\tau$ is obtained from $\{F_i\}_{1\le i\le n}$ by
glueing along $2n-1$ arcs,
we have
$\beta_1 (F_{\tau})  =n+\sum_{i=1}^n \beta_1(F_i)$. 
Since  $\beta_1(F_i)\ge 2|a_i| -1$, we have the following conclusion:

$$\beta_1 (F_{\tau})  =n+\sum_{i=1}^n \beta_1(F_i)  \ge \sum_{i=1}^n 2|a_i|, 
{\rm \ and \  hence}$$
$$ g(F_{\tau})\ge \sum_{i=1}^n |a_i| =
g(K(q/p))+\sum_{i=1}^n( |a_i| -1).$$

See Figure \ref{fig:claspdisk}(3) for an invariant Seifert surface realizing the equality above.

\subsection{Lower bound for genera of invariant Seifert surfaces for 
$(K(q/p), h_{q/p}, \tau_{q/p}^c)$}
\label{subsec:estimate2}

Let $(N, \gamma)=(\cl(S^3-M),\gamma)$ be 
the complementary sutured manifold of $(M,\gamma)$.
Note that $(N, \gamma)$ contains $\tau_{q/p}^c$, and hence, 
by Theorem \ref{thm:disjoint-Seifert2},
there is 
an invariant Seifert surface 
$F_{\tau^c}$ of minimal equivariant genus
for $(K, h_{q/p},\tau_{q/p}^c)$
contained in $(N, \gamma)$.
As in Figure \ref{fig:cpd-out}, take $2n-1$ product disks in 
$(N, \gamma)$.

\figer{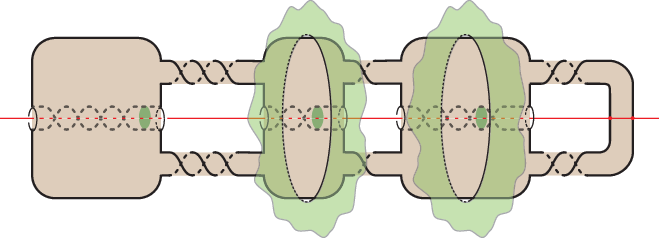}{0.7}{fig:cpd-out}{
$2n-1$ product disks for $(N,\gamma)$ containing $\tau_{q/p}^c$}

First, apply $n$ product decompositions (along the product disks in
the drilled holes) to 
fill in the drilled holes, and then $n-1$ product decompositions
(along the product disks whose central parts are omitted in 
Figure \ref{fig:cpd-out}) 
to decompose the result into $n$ sutured manifolds $(N_i, \gamma_i)$,
each of which is the 
complementary sutured manifold of the sutured manifold 
(that is shaded in Figure \ref{fig:out-pieces}(1))
obtained by thickening
an unknotted annulus with $|b_i|$ full-twists ($i=1,2,\dots, n$).
As in Subsection \ref{subsec:estimate1},
we may assume that $F_{\tau^c}$ intersects each of
the product disks in an arc.
Set $F_i:=F_{\tau^c}\cap N_i$.
Remark that $F_i$ is a surface properly embedded in $N_i$ with $\partial F_i=\gamma_i$,
where $N_i$ is the complement of the shaded handlebody in Figure \ref{fig:out-pieces}(1).

\figer{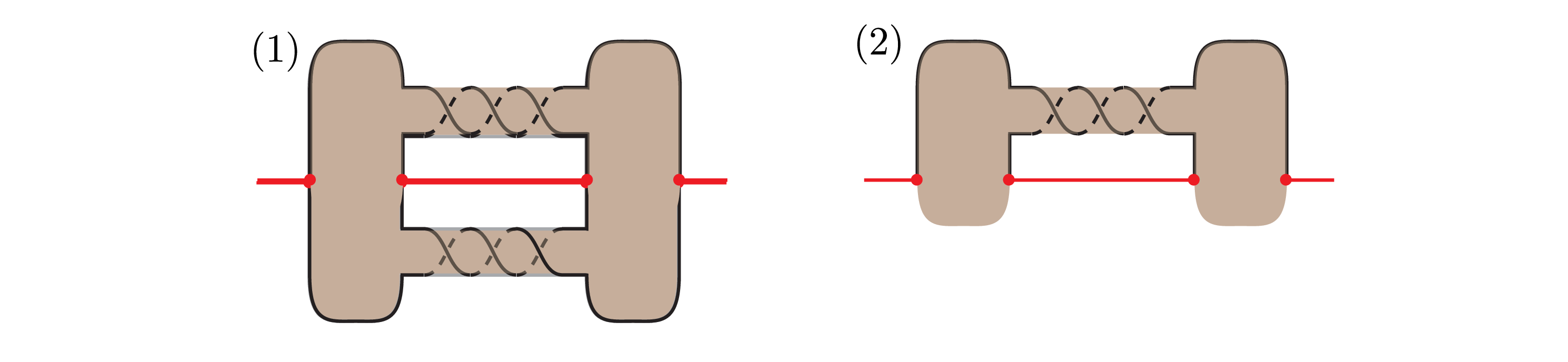}{0.9}{fig:out-pieces}{
Each of $N_i$ and $\check{N}_i:=N_i /h_{q/p}$ is 
the complement of the shaded handlebody.}

\begin{lemma}\label{lem:estimate-Betti2}
$\beta_1(F_i)\ge
\left\{ 
\begin{array}{ll}
3 & (|b_i|>1) \\
1 & (|b_i|=1)
\end{array} \right.
$
\end{lemma}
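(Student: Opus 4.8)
The plan is to mirror the proof of Lemma~\ref{lem:estimate-Betti}, passing to the quotient and reducing to a bound downstairs. First I would form $(\check N_i,\check\gamma_i)=(N_i,\gamma_i)/h_{q/p}$, the complement of the shaded handlebody in Figure~\ref{fig:out-pieces}, and set $\check F_i=F_i/h_{q/p}$, a compact surface properly embedded in $\check N_i$ with $\partial\check F_i=\check\gamma_i$. After the product decompositions of Subsection~\ref{subsec:estimate2} the involution $h_{q/p}$ acts freely on the interior of $F_i$, so $F_i\to\check F_i$ is an unbranched double cover and $\chi(F_i)=2\chi(\check F_i)$; exactly as in Lemma~\ref{lem:estimate-Betti} (removing closed intersection components and treating $\check F_i$ as connected with a single boundary circle $\check\gamma_i$), this yields $\beta_1(F_i)=2\beta_1(\check F_i)-1$. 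It therefore suffices to prove $\beta_1(\check F_i)\ge 2$ when $|b_i|>1$ and $\beta_1(\check F_i)\ge 1$ when $|b_i|=1$.

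For the lower bounds on $\beta_1(\check F_i)$ I would argue by the classification of surfaces with one boundary circle. Since $\check\gamma_i$ is not null-homotopic in the handlebody $\check N_i$, the surface $\check F_i$ is not a disk, so $\beta_1(\check F_i)\ge 1$; this settles the case $|b_i|=1$. For $|b_i|>1$, suppose $\beta_1(\check F_i)=1$. A surface with a single boundary circle and $\beta_1=1$ cannot be orientable (an orientable surface with one boundary circle has even $\beta_1$), so $\check F_i$ would be a M\"obius band. But a M\"obius band spanning $\check\gamma_i$ would exhibit $\check\gamma_i$ as the boundary of a regular neighbourhood of its core in $\check N_i$, forcing that core to carry framing $\pm1$, i.e.\ $|b_i|=1$, contrary to hypothesis. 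Hence $\beta_1(\check F_i)\ge 2$ when $|b_i|>1$, as required.

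The main obstacle is the framing computation underlying the M\"obius-band exclusion: one must check that the twisting number $|b_i|$ of the shaded handlebody descends correctly through the quotient and is precisely the self-framing that obstructs a M\"obius band, paralleling the role of the class $2\alpha[S^1]+[\partial D^2]$ in Lemma~\ref{lem:estimate-Betti}. A secondary point to verify is that the meridian-disk cut-and-paste used to normalise $\check F_i$ respects the $h_{q/p}$-action, so that orientability upstairs matches the parity argument downstairs. The qualitatively new feature, as against Lemma~\ref{lem:estimate-Betti}, is that the bound saturates at $\beta_1(\check F_i)=2$ rather than growing with $|b_i|$; this reflects that all the twists at the $i$-th site can be absorbed into a single plumbing summand, matching the invariant surface constructed in Subsection~\ref{subsec:estimate2}.
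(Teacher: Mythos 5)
Your proof breaks at its very first step: the claim that after the product decompositions $h_{q/p}$ acts freely on the interior of $F_i$, so that $F_i\to\check F_i$ is an unbranched double cover. This is false in the $\tau^c$ case. By the definition of an invariant Seifert surface for the marked knot $(K,h_{q/p},\tau_{q/p}^c)$ we have $\Fix(h_{q/p})\cap F_{\tau^c}=\tau_{q/p}^c$, so the surface \emph{contains} the long fixed arc; the product decompositions do not remove it, and each piece $F_i=F_{\tau^c}\cap N_i$ contains the two arcs $\tau_{q/p}^c\cap N_i$ of the axis. Hence $F_i\to\check F_i$ is a double cover \emph{branched} along two arcs, and the Euler characteristic relation is $\chi(F_i)=2\chi(\check F_i)-2$, i.e.\ $\beta_1(F_i)=2\beta_1(\check F_i)+1$, not $2\beta_1(\check F_i)-1$ as you assert. (The unbranched formula is the one valid in Lemma \ref{lem:estimate-Betti}, for the short arc; the two lemmas differ precisely on this point, and you have imported the wrong formula.)

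The error propagates. With the correct relation one only needs $\beta_1(\check F_i)\ge 1$ when $|b_i|>1$, whereas your formula forces you to prove $\beta_1(\check F_i)\ge 2$ --- and that stronger bound is actually \emph{false} for the true quotient surface. Indeed $\partial\check F_i$ is not the single circle $\check\gamma_i$ you describe: the images of the branch arcs contribute, so $\partial\check F_i=\check\gamma_i\cup\check\tau_i^c$, which is a torus knot or link of type $T(2,b_i)$. The remark following the lemma in the paper exhibits, for $|b_i|>1$, a quotient surface that is an annulus, i.e.\ $\beta_1(\check F_i)=1$, coming from the genus-minimizing invariant surface; so your M\"obius-band/framing exclusion, besides being aimed at a surface with the wrong boundary structure, attempts to prove something untrue, and no repair within your framework can work. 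The correct route is the paper's: when $|b_i|>1$ the boundary $T(2,b_i)$ is a nontrivial knot or link, hence $\check F_i$ is not a disk and $\beta_1(\check F_i)\ge 1$, which with the branched-cover relation gives $\beta_1(F_i)\ge 3$; when $|b_i|=1$ one only uses $\beta_1(\check F_i)\ge 0$ to get $\beta_1(F_i)\ge 1$.
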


\begin{proof}
Consider the following quotient quadruple
$$(\check{N_i}, \check{\gamma_i}, \check{F_i},\check{\tau}^c_i):=
(N_i, \gamma_i, F_i,\tau^c_{q/p}\cap N_i)/h_{q/p}.$$
Note that $\check{N_i}$ is a ball,
and $\check{F_i}$ is a surface
embedded in the ball $\check{N_i}$ in such a way that
$\partial \check{F_i}$ consists of 
the two arcs
$\check{\gamma}_i$ in 
$\partial \check{N_i}$ and
 the two arcs $\check{\tau}^c_i$
 properly embedded in $\partial \check{N_i}$.
 Thus, 
 $\partial \check{F_i} =\check{\gamma_i} \cup \check{\tau}_i^c$
  (see Figure \ref{fig:out-pieces}(2)).
Now, $\partial \check{F_i}$ is a torus knot or link of type $T(2, b_i)$ and hence
$\beta_1(\check{F_i})\ge 1$ except for the trivial case of $|b_i|=1$, in which case,
$\partial \check{F_i}$ is the unknot.
By using the fact that $\Fix(h_{q/p}|_{F_i})$ consists of two arcs,
we see $1-\beta_1(F_i)=\chi(F_i)=2\chi(\check{F}_i)-2=-2\beta_1(\check{F}_i)$.
Therefore, $\beta_1(F_i)\ge2\beta_1(\check{F}_i)+1\ge3$
except for the primitive case $|b_i|=1$, in which case,
$\beta_1(\check{F}_i)\ge0$ and so $\beta_1(F_i)\ge~1$.
\end{proof}

\begin{remark}
 According to whether $|b_i|=1$ or $|b_i|>1$,
let $G_i$ be a disk or an unknotted annulus
with $|b_i|$ half-twists in $\check{M}_i$ bounded by
$\check{\mu}_i \cup\check{\tau}_i^c$.
Then the inverse image of $G_i$ is an 
$h_{q/p}$-invariant annulus or two-holed torus bounded
by $\mu_i$. 
Thus, the inequality in Lemma \ref{lem:estimate-Betti2}
is promoted to an equality.
\end{remark}

Since $F_{\tau^c}$ is obtained from $\{F_i\}_{1\le i\le n}$ by
glueing along $2n-1$ arcs,
we have
$\beta_1(F_{\tau^c})= n +\sum_{i=1}^n \beta_1(F_i)$,
and we have the following estimate from below
(see Figures \ref{fig:longarc} and \ref{fig:b-odd} for surfaces
 realizing the equality):

\begin{align*}
\beta_1(F_{\tau^c})&= n +\sum_{i=1}^n \beta_1(F_i)\\
& \ge n  + 3n - 2\#\{i\ |\ |b_i|=1\}, {\rm and\ hence,}\\
g(F_{\tau^c})&\ge2g(K(q/p))- \#\{i\ |\ |b_i|=1\}\\
&=g(K(q/p))+\#\{i\ |\ |b_i|>1\}
 \end{align*}
 
 Now we have obtained lower bounds for equivariant genera
 for all cases, which coincide with the upper bounds obtained
 in Section \ref{sec:construction} by explicit construction.
 This completes the proof of Theorem \ref{thm:main}.
 
\medskip
We end this section by posing the following variant of \cite[Question 6.1]{HHS1}.
\begin{question}
\label{question1}
{\rm
Is there an algebraic invariant of a marked strongly invertible knot 
that gives an effective estimate of the equivariant genus
and recovers the main result Theorem \ref{thm:main} of this paper?
}
\end{question}

\section{Actions of strong inversions on Kakimizu complexes}
\label{sec:kakimizu}

For an oriented link $L$ in $S^3$,
the {\it Kakimizu complex} $MS(L)$ of $L$ is the {\it flag} simplicial complex 
whose vertices correspond to the (isotopy classes of) 
minimal genus Seifert surfaces for $L$
and edges to pairs of such surfaces with disjoint interiors
(see \cite{Kakimizu1988}).
Kakimizu also introduced a similar complex $IS(L)$
whose vertex set is the set of incompressible Seifert surfaces,
and proved that both $MS(L)$ and $IS(L)$ 
are connected if $L$ is non-splittable,
refining the result of Scharlemann-Thompson \cite{Scharlemann-Thompson}
for minimal genus Seifert surfaces for knots.
Since then, various interesting results have been obtained
\cite{Agol-Zhang, Banks, Johnson-Pelayo-Wilson, Juhasz, Kakimizu2005,  Neel, Pelayo,
Przytycki-Schultens, Sakuma1994, Sakuma-Shackleton, Schultens, Valdez-Sanchez}.
In particular,
Przytycki-Schultens \cite{Przytycki-Schultens} proved 
the Kakimizu conjecture, which says that
$MS(L)$ is contractible for every non-splittable link $L$.
(To be precise, they prove the conjecture
in a more general setting with a \lq\lq rectified" definition of the complex,
where the two definitions coincide for oriented non-splittable links in $S^3$ 
all of whose minimal genus Seifert surfaces are connected.)
They also proved various interesting results concerning the fixed point sets
of the actions of subgroups of 
the symmetry groups
of non-splittable oriented links $L$ on (the rectified) $MS(L)$,
including a generalization of Edmonds' theorem.
Our Corollary \ref{HHS-Corollary 1.6} may be regarded as 
a slight refinement of a special case of (a variant of)
their theorem \cite[Theorems 1.2]{Przytycki-Schultens}.

In the remainder of this section, we study the case where $K$ is a $2$-bridge knot.
For a $2$-bridge knot $K=K(q/p)$,
the minimal genus Seifert surfaces for $K$ were classified by 
Hatcher-Thurston \cite[Theorem 1]{Hatcher-Thurston}
and the structure of $MS(K)$ was described 
by \cite[Theorem 3.3]{Sakuma1994}.
In this section, 
we describe the actions of the strong inversions on $MS(K)$. 
Throughout this section, we assume $q/p$ has the following continued fraction expansion (recall the convention introduced in 
Section \ref{sec:statement-result}).
\[
\frac{q}{p}=
[2a_1,2a_2,\cdots, 2a_{2n-1}, 2a_{2n}]
\]

For the above positive integer $n$,
let $\KK(n)$ be the simplicial complex characterized by the following properties
(see Figure \ref{fig:kcpx1}).
\begin{enumerate}
\item
The underlying space $|\KK(n)|$ is the $(2n-1)$-dimensional cube $[-1,1]^{2n-1}$ in the vector space $\RR^{2n-1}=\RR[\be_1,\be_2,\cdots,\be_{2n-1}]$.
\item
The vertex set of $|\KK(n)|$ is the set of corners 
$\{-1,1\}^{2n-1}$ of the cube.
Thus a vertex of $\KK(n)$ is identified with a vector $\ee=\sum_{i=1}^{2n-1}\epsilon_i \be_i$
with $\epsilon_i\in\{\pm 1\}$.
\item
The $(2n-1)$-simplices of $\KK(n)$ are described as follows.
Consider the $2n$ vectors $\vv_i:=(-1)^{i-1} \, 2(\be_{i-1}+\be_i)\in \RR^{2n-1}$ ($1\le i\le 2n$),
where $\be_{-1}:=\vec 0$ and $\be_{2n}:=\vec 0$.
Note that $\sum_{i=1}^{2n} \vv_i=\vec{0}$, which is the unique linear relation
among $\{\vv_i\}_{1\le i\le 2n}$ up to scalar multiplication.
Then $2n$ vertices of $\KK(n)$ span a $(2n-1)$-simplex if and only if
we can arrange them into a cyclically ordered set  $(\ee_0,\ee_1,\cdots,\ee_{2n-1})$
so that 
\[
\{\ee_{1}-\ee_{0},\ee_{2}-\ee_1,\cdots, \ee_{2n-1}-\ee_{2n-2}, \ee_0- \ee_{2n-1}\}
=
\{\vv_1,\vv_2,\cdots,\vv_{2n}\}.
\]
\end{enumerate}

We first recall the following special consequence of
\cite[Theorem 3.3]{Sakuma1994},
which also guarantees that $\KK(n)$ is a simplicial complex.

\begin{proposition}
\label{prop:structure-MS-special} 
If $|a_i|\ge 2$ for every $i$ ($1\le i\le 2n)$,
then $MS(K)$ is isomorphic to $\KK(n)$.
\end{proposition}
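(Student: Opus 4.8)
The plan is to prove Proposition \ref{prop:structure-MS-special} by citing and unpacking the classification of minimal genus Seifert surfaces for $2$-bridge knots due to Hatcher-Thurston \cite{Hatcher-Thurston} and the description of their incidence structure in \cite[Theorem 3.3]{Sakuma1994}, and then constructing an explicit simplicial isomorphism onto $\KK(n)$. First I would recall, from Hatcher-Thurston, that under the hypothesis $|a_i|\ge 2$ for all $i$, every minimal genus Seifert surface for $K=K(q/p)$ arises from a choice of how to resolve the continued-fraction data $[2a_1,\dots,2a_{2n}]$, and that these surfaces are parametrized by sign vectors. The natural candidate bijection sends each minimal genus Seifert surface to a vertex $\ee=\sum_{i=1}^{2n-1}\epsilon_i\be_i\in\{-1,1\}^{2n-1}$ of the cube, where the signs $\epsilon_i$ record the local resolution choices at the $2n-1$ "interior" sites of the plumbing. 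The hypothesis $|a_i|\ge 2$ is exactly what rigidifies the combinatorics so that the vertex set of $MS(K)$ is precisely $\{-1,1\}^{2n-1}$ with no collapsing or extra identifications, which is why it also guarantees that $\KK(n)$ is genuinely a simplicial complex.

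The core of the argument is to match the edge and simplex relations. The edges of $MS(K)$ connect pairs of minimal genus Seifert surfaces with disjoint interiors; I would translate the disjointness criterion from \cite[Theorem 3.3]{Sakuma1994} into a condition on the sign vectors and verify that this condition coincides with the adjacency built into $\KK(n)$. Since both $MS(K)$ and $\KK(n)$ are \emph{flag} complexes, it suffices to establish the isomorphism on the level of $1$-skeleta: a flag complex is determined by its graph, so once I show that the bijection $\ee$ carries edges to edges and non-edges to non-edges, the full simplicial isomorphism follows automatically. Concretely, I would check that the maximal ($(2n-1)$-dimensional) simplices of $MS(K)$ — the cyclically compatible families of $2n$ mutually disjoint surfaces coming from the $2n$ "slots" in the continued fraction — correspond exactly to the cyclically ordered vertex tuples $(\ee_0,\dots,\ee_{2n-1})$ whose successive differences realize the vector set $\{\vv_1,\dots,\vv_{2n}\}$ described in property (3) of $\KK(n)$.

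The key geometric input is the interpretation of the vectors $\vv_i=(-1)^{i-1}\,2(\be_{i-1}+\be_i)$. Each $\vv_i$ should correspond to the elementary move that flips the resolution at a single plumbing slot while staying within a fixed maximal simplex; the relation $\sum_{i=1}^{2n}\vv_i=\vec 0$ encodes that applying all $2n$ elementary moves returns to the starting surface (cyclically closing up the simplex), and that this is the \emph{unique} linear relation up to scaling reflects that the $2n$ surfaces of a maximal simplex are in "general position" — no proper subfamily closes up. I would verify that traversing the $2n$ moves in the order dictated by the continued fraction produces exactly the sign changes recorded by adding the $\vv_i$, thereby matching the combinatorial recipe in \cite{Sakuma1994} with the vector description in $\KK(n)$.

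The main obstacle I anticipate is the faithful bookkeeping in this last translation: both the vertex labelling and the simplex description involve nontrivial index conventions (the alternating sign $(-1)^{i-1}$, the paired coordinates $\be_{i-1}+\be_i$ with the boundary conventions $\be_{-1}=\be_{2n}=\vec 0$, and the cyclic ordering), and I must confirm that the particular "negative" continued-fraction format fixed in Section \ref{sec:statement-result} yields precisely these signs rather than some reindexed or reflected version. In short, the conceptual content is supplied by \cite{Hatcher-Thurston} and \cite{Sakuma1994} together with the flag property, and the real work is verifying that the explicit coordinatization $\ee$ intertwines the two incidence structures slot-by-slot; I expect this to be a careful but ultimately mechanical check once the dictionary between resolution choices and sign vectors is pinned down.
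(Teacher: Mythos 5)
Your proposal is correct in substance and follows essentially the same route as the paper: Hatcher--Thurston's classification realizes every minimal genus Seifert surface as a successive plumbing of the $2n$ annuli, coded by a sign vector in $\{-1,1\}^{2n-1}$ (the paper phrases this as an orientation $\rho$ of a linear tree $T$, compared against a fixed alternating orientation $\rho_+$), and the incidence structure is matched through \cite[Theorem 3.3]{Sakuma1994} by showing that the elementary moves generating cycles correspond exactly to adding the vectors $\vv_i$, so that cycles in $\mathcal{O}(T)$ correspond to the cyclic tuples defining the $(2n-1)$-simplices of $\KK(n)$.

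One caution, however: your reduction to $1$-skeleta is not justified as written. $MS(K)$ is flag by definition, but $\KK(n)$ is \emph{not} defined as a flag complex --- it is defined by prescribing its maximal simplices --- and its flagness is a nontrivial fact that emerges only as a corollary of the isomorphism you are proving (or would require a separate combinatorial argument about this triangulation of the cube). Matching edges and non-edges therefore only gives that preimages of simplices of $\KK(n)$ are simplices of $MS(K)$; the converse direction, that the image of a clique of $MS(K)$ spans a simplex of $\KK(n)$, would remain open. Note also that adjacency in $\KK(n)$ is not ``differing by some $\vv_i$'': for instance the antipodal corners $(+1,\dots,+1)$ and $(-1,\dots,-1)$ span an edge, since they lie on a common cyclic tuple (these correspond to the disjoint surfaces $F(\rho_+)$ and $F(\rho_-)$). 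This gap is harmless for your plan only because you also propose to match the maximal simplices directly (cycles $\leftrightarrow$ cyclic $\vv_i$-difference sequences): since by Sakuma's theorem every simplex of $MS(K)$ is a subset of a cycle, and every simplex of $\KK(n)$ is a face of a maximal simplex, that matching alone yields the isomorphism, and it is exactly what the paper does. A last bookkeeping correction to your dictionary: the elementary move is indexed by an annulus $A_i$ (a sink $v_i$ of $T$), not by a single plumbing slot; it flips the signs at the one or two slots adjacent to $A_i$, which is why $\vv_i=(-1)^{i-1}\,2(\be_{i-1}+\be_i)$ has two nonzero coordinates except at the ends $i=1,2n$.
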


In order to explain Proposition \ref{prop:structure-MS-special},
let $T$ be a tree, with $2n$ vertices, 
whose underlying space is homeomorphic to a closed interval, 
and let $v_1, v_2, \ldots, v_{2n}$ be the vertices of $T$, 
lying on the interval in this order. 
For each vertex $v_i$ we associate an unknotted oriented annulus $A_i$ in $S^3$ with 
$a_i$ right-hand full twists. 
Then, $K(q/p)$ is equal to the boundary of a surface
obtained by successively plumbing the annuli 
$A_1, A_2, \ldots, A_{2n}$, 
and this surface is a minimal genus Seifert surface for $K$. 
Moreover, every minimal genus Seifert surface for $K$ is obtained in this way
by \cite[Theorem 1]{Hatcher-Thurston}.

There are $2^{2n-1}$ different ways of successive plumbing, 
according as $A_{i+1}$ is plumbed to $A_i$ from above or from below with respect to a normal vector field on $A_{j}$. 
Thus, successive plumbing can be represented by an {\it orientation of $T$}, directing each edge in one of two ways, by the following rule: 
If $\rho$ is an orientation of $T$, then we plumb $A_{i+1}$ to $A_i$ from above or below according as the edge joining $v_i$ and $v_{i+1}$ has initial point $v_i$ or $v_{i+1}$, respectively, with respect to $\rho$ 
(cf. \cite[Section 2]{Sakuma1994}).
We denote by $F(\rho)$ the Seifert surface for $K(q/p)$ determined by the orientation $\rho$.  
By \cite[Theorem 1]{Hatcher-Thurston} 
(cf. \cite[Theorem 2.3]{Sakuma1994}),
the condition that $|a_i|\ge 2$ for every $i$ implies that 
the correspondence $\rho \mapsto F(\rho)$ determines a bijection 
from the set $\mathcal{O}(T)$ of all orientations of $T$ 
to the vertex set of $MS(K)$.

To describe the structure of $MS(K)$, we introduce a few definitions. A vertex $v_{i}$ of $T$ is said to be a {\em sink} for the orientation $\rho$ of $T$ if every edge of $T$ incident on $v_{i}$ points towards $v_{i}$. If $v_{i}$ is a sink for $\rho$, then let $v_i(\rho)$ denote the orientation of $T$ obtained from $\rho$ by reversing the orientations of each edge incident on $v_i$. A {\em cycle} in $\mathcal{O}(T)$ is a sequence

\[
\rho_1 \xrightarrow{v_{i_1}} 
\rho_2 \xrightarrow{v_{i_2}} 
\ldots \xrightarrow{v_{i_{2n-1}}}
\rho_{n} \xrightarrow{v_{i_{2n}}}
\rho_1,
\] where $(i_1,i_2,\ldots, i_{2n})$ is a permutation of $\{1,2,\ldots,2n\}$ and $\rho_1,\rho_2,\ldots,\rho_{2n}$ are mutually distinct elements of $\mathcal{O}(T)$ such that $v_{i_{k}}(\rho_k)=\rho_{k+1}$ for every $k$, where the indices are considered modulo $n$. According to \cite[Theorem 3.3]{Sakuma1994},
$MS(K)$ can be described as follows:

\begin{itemize}
\item[$\circ$] 
The vertex set of $MS(K)$ is identified with $\mathcal{O}(T)$.

\item[$\circ$] 
A set of vertices $\{\rho_0,\rho_1,\ldots,\rho_k\}$ spans a $k$-simplex in $MS(K)$ if and only if it is contained in a cycle of $\mathcal{O}(T)$.
\end{itemize}
Moreover, $MS(K)$ gives a triangulation of the cube $[-1,1]^{2n-1}$
whose vertices are the corners of the cube 
(see \cite[Propositions 3.9 
and the paragraph after Remark 3.10]{Sakuma1994}).

Note that there are two special elements $\rho_+$ and $\rho_-$ of $\mathcal{O}(T)$
that are {\it alternating} in the sense that every vertex is either 
a sink or a source. 
We assume that $v_i$ is a sink or a source for $\rho_+$
according to whether $i$ is odd or even:
$\rho_-$ is obtained from $\rho_+$ by reversing the orientation
of every edge.
In \cite[Figure 2]{Hatcher-Thurston}
(under a suitable orientation convention),
$F(\rho_+)$ is constructed by plumbing 
of the bands $A_1, A_2, \cdots, A_{2n}$
where every plumbing disk $A_i\cap A_{i+1}$ 
is the horizontal plumbing square.
For $F(\rho_-)$,
every plumbing disk is 
the complement of the square in the 
horizontal plane containing it, compactified by a point
at $\infty$.
We can also observe that $F(\rho_{\pm})$ are the surfaces
$R_{\pm}(\gamma)$ in the boundary of the sutured manifold $(M,\gamma)$
with $\gamma=K$ introduced in Figure \ref{fig:sysSmfd}.

In order to show Proposition \ref{prop:structure-MS-special},
we introduce a notation which is a variant of that in 
\cite[the paragraph after Remark 3.10]{Sakuma1994}.
For $\rho\in\mathcal{O}(T)$, let 
$\ee=(\epsilon_1,\cdots, \epsilon_{2n-1})=\sum_{i=1}^{2n-1}\epsilon_i \be_i$ 
be the vertex of $\KK(n)$, determined by the rule that
$\epsilon_i$ is $+1$ or $-1$
according to whether 
$\rho_+$ and $\rho$ determine the same or distinct orientations
on the edge between $v_i$ and $v_{i+1}$.
Then the correspondence $\rho\mapsto \ee$ gives a bijection 
from $\mathcal{O}(T)$, the vertex set of $MS(K)$,
onto the vertex set of $\KK(n)$.
Moreover, we see that it induces an isomorphism between the flag simplicial complexes
$MS(K)$ and $\KK(n)$ by using the following fact.
Let  $\rho$ and $\rho'$ be distinct elements of $\mathcal{O}(T)$,
and let $\ee$ and ${\ee}\ '$ be the corresponding vertices of $\KK(n)$, respectively.
Then, there is a vertex $v_i$ of $T$
that is a sink for $\rho$ and $\rho'=v_i(\rho)$,
if and only if ${\ee}\ '=\ee + \vv_i$ for some $\vv_i$. 
Hence, we obtain Proposition \ref{prop:structure-MS-special}.

In the case where $|a_i|=1$ for some $i$,
$MS(K)$ is the quotient of $\KK(n)$ as described below
(see Figure \ref{fig:kcpx1}).

\figer{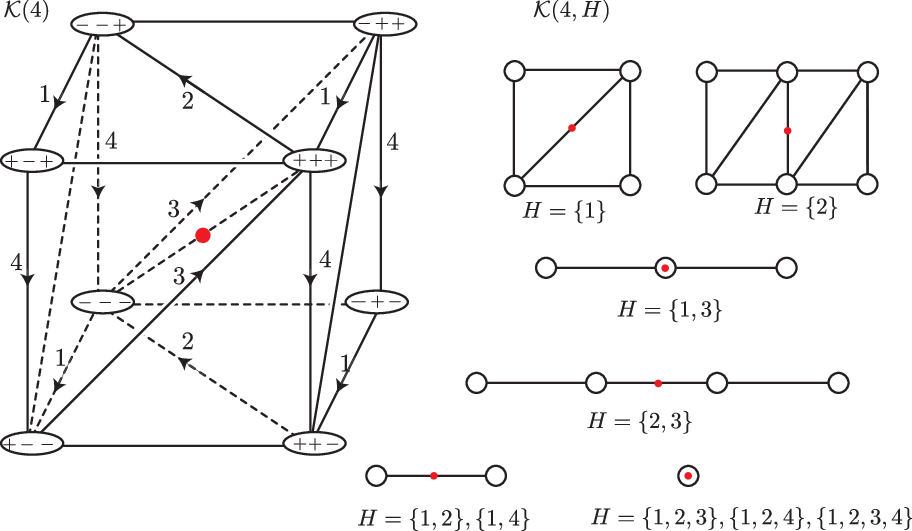}{1}{fig:kcpx1}{The complexes $\KK(n)$ (left) and $\KK(n,H)$ (right), representing the Kakimizu complex $MS(K(q/p))$.
In the left figure, 
the symbol $(+,+,-)$, for example, denotes the vertex of $\KK(4)$ 
corresponding to the vector $\ee=(+1,+1,-1)$.
An edge with an arrow and a number $i$ represents the vector $\vv_i$, where
$\vv_1=(2,0,0)$, $\vv_2=(-2,2,0)$, $\vv_3=(0,-2,2)$ and $\vv_4=(0,0,-2)$.
In all figures,
the red dot represents the unique fixed point of the involution $\hk$
induced by the strong inversion $h_{q/p}$.
If the fixed point is a vertex then it is represented by a circle with a red dot.}

Set 
\[
H=H(q/p)=\{i\in\{1,2,\cdots, 2n\} \ | \ |a_i|=1\},
\] 
where the symbol $H$ stands for the {\it Hopf band}.
Let $W=W(q/p)$ be the subspace of $\RR^{2n-1}$ spanned by the vectors
$\{\vv_i \ | \ i\in H\}$. 
Let $\pi=\pi_{q/p}$ be the projection from $\RR^{2n-1}$
to the quotient space $\RR^{2n-1}/W$.
Then there is a unique simplicial complex, $\KK(n;H)$,
such that 
the underlying space $|\KK(n;H)|$ is
the image $\pi(|\KK(n)|)$
and that $\pi$ induces a simplicial map from $\KK(n)$
onto $\KK(n;H)$.

By \cite[Theorem 3.3 and Proposition 3.11]{Sakuma1994},
we obtain the following.

\begin{proposition}
\label{prop:structure-MS}
For a $2$-bridge knot
$K=K(q/p)$ with
\[
\frac{q}{p}=
[2a_1,2a_2,\cdots, 2a_{2n-1}, 2a_{2n}],
\]
$MS(K)$ is isomorphic to $\KK(n;H)$ with $H=H(q/p)$.
\end{proposition}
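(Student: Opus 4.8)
The plan is to deduce Proposition~\ref{prop:structure-MS} from Proposition~\ref{prop:structure-MS-special} by a quotient argument, following the framework already set up in \cite{Sakuma1994}. The key point is that the combinatorics of $MS(K)$ for a $2$-bridge knot $K=K(q/p)$ is governed entirely by the multiset of continued fraction entries $\{|a_i|\}$, and that the Hopf-band entries (those with $|a_i|=1$) produce identifications of Seifert surfaces that are otherwise distinct. Concretely, when $|a_i|=1$ the annulus $A_i$ is a Hopf band, and plumbing a Hopf band from above or from below yields isotopic Seifert surfaces; hence the two orientations of the edge of $T$ at such a site give the \emph{same} vertex of $MS(K)$. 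I would begin by isolating this geometric fact and translating it into the statement that, under the bijection $\rho\mapsto\ee$ constructed before Proposition~\ref{prop:structure-MS-special}, two orientations $\rho,\rho'$ determine isotopic minimal genus Seifert surfaces precisely when $\ee$ and $\ee'$ differ by an element of the lattice generated by $\{\vv_i : i\in H\}$, i.e.\ when $\pi(\ee)=\pi(\ee')$.

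First I would recall from \cite[Theorem 3.3]{Sakuma1994} that, for general $q/p$, the vertex set of $MS(K)$ is the quotient of $\mathcal{O}(T)$ by the equivalence relation identifying orientations that agree away from the Hopf-band edges, and that the simplicial structure is the induced one. Having set up the map $\rho\mapsto\ee$, the identification of vertices is exactly the fibers of $\pi$ restricted to the corner set $\{-1,1\}^{2n-1}$, since the $\vv_i$ with $i\in H$ have the form $\pm 2(\be_{i-1}+\be_i)$ and flipping the Hopf-band edge at $v_i$ corresponds to translating $\ee$ by $\vv_i$. Thus the vertex set of $MS(K)$ is canonically identified with $\pi(\{-1,1\}^{2n-1})$, which is by definition the vertex set of $\KK(n;H)$. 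For the higher simplices, I would use that both $MS(K)$ and $\KK(n)$ are flag complexes and that $\pi$ maps the triangulation $\KK(n)$ of the cube $[-1,1]^{2n-1}$ onto a triangulation of the image $\pi(|\KK(n)|)$; here the cited \cite[Proposition 3.11]{Sakuma1994} supplies exactly the compatibility of the cycles in $\mathcal{O}(T)$ with the quotient, guaranteeing that $\pi$ carries simplices to simplices and that the image complex $\KK(n;H)$ is well defined and simplicial.

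The main obstacle I anticipate is verifying that the quotient $\pi(|\KK(n)|)$ really inherits a \emph{simplicial} (as opposed to merely cell- or $\Delta$-) structure, i.e.\ that no two distinct simplices of $\KK(n)$ get folded onto overlapping images that fail to intersect in a common face. This is precisely the content needed to assert the existence and uniqueness of $\KK(n;H)$ with $\pi$ simplicial, and it is where the linear-algebraic description of the $\vv_i$ does the real work: one must check that the sublattice generated by $\{\vv_i : i\in H\}$ meets the cube's triangulation in a way that respects faces. I would handle this by appealing to \cite[Proposition 3.11]{Sakuma1994}, which analyzes exactly how the Hopf-band identifications collapse cycles, and then verify that the induced map on the flag complexes preserves the property that a vertex set spans a simplex if and only if its preimage does. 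Once vertex sets, edges, and the flag condition are matched on both sides, the isomorphism $MS(K)\cong\KK(n;H)$ follows, completing the proof.
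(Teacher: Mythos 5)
Your proposal takes essentially the same route as the paper: there too, $\KK(n;H)$ is defined as the image of $\KK(n)$ under the projection $\pi$ killing the span of $\{\vv_i : i\in H\}$, and the proposition is then obtained by invoking \cite[Theorem 3.3 and Proposition 3.11]{Sakuma1994} for both the Hopf-band identifications and the existence of a unique simplicial structure on the image making $\pi$ simplicial, which is exactly the obstacle you isolate. One small caveat: your preliminary phrasing of the equivalence relation (``orientations that agree away from the Hopf-band edges'') is too coarse---the correct relation, which you do in fact use afterwards, identifies $\ee$ and $\ee'$ only when they differ by a sum of vectors $\vv_i$ with $i\in H$, i.e.\ by flipping the coordinates $\epsilon_{i-1}$ and $\epsilon_{i}$ \emph{simultaneously} at a Hopf-band site, not independently.
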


\begin{remark}
(1) In the above proposition,
the vertex of $MS(K)\cong \KK(n;H)$ 
determined by $\ee\in \{-1,1\}^{2n-1}$
is represented by the Seifert surface $F(\rho)$,
where $\rho$ is the element of $\mathcal{O}(T)$
corresponding to $\ee$ through the bijection
between $\mathcal{O}(T)$ and $\{-1,1\}^{2n-1}$ described
at the end of the explanation of 
Proposition \ref{prop:structure-MS-special}.

(2) Except when $H$ is equal to the whole set $\{1,2,\cdots, 2n\}$, 
we have $\dim |MS(K)|=2n-1-(\# H)$.
The exceptional case occurs 
if and only if $K$ is a fibered knot.
\end{remark}

We now describe the action of the strong inversions on $MS(K)$ with $K=K(q/p)$.
To this end, let $\tilde \hk$ and $\tilde \hk'$, respectively, be the linear automorphisms 
of $\RR^{2n-1}$
represented by the diagonal matrix 
$(-\delta_{i,j})_{1\le i,j\le 2n-1}$ and the anti-diagonal matrix
$(\delta'_{i,j})_{1\le i,j\le 2n-1}$,
where $\delta_{i,j}$ is Kronecker's delta,
and $\delta'_{i,j}=1$ or $0$ 
according to whether $i+j=2n-1$ or not.
Note that 
(1)
the subspace $W=W(q/p)$ is preserved by $\tilde \hk$ 
and that 
(2)
$W$ is preserved by $\tilde \hk'$ if and only it it is
{\it symmetric}
in the following sense: for any pair of positive integers $i$ and $j$ with $i+j=2n$,
we have $i\in H$ if and only if $j\in H$.
Now we define automorphisms $\hk$ and $\hk'$ 
of the simplicial complex $\KK(n;H)$
as follows (see Figures \ref{fig:kcpx1} and \ref{fig:kcpx2}).

\begin{enumerate}
\item
The linear automorphism $\tilde \hk$ descends to a linear automorphism of $\RR^{2n-1}/W$,
and its restriction to $|\KK(n;H)|$ determines the automorphism
$\hk$ of $\KK(n;H)$.
\item
Suppose $H$ is symmetric.
Then the linear automorphism $\tilde \hk'$ descends to a linear automorphism of $\RR^{2n-1}/W$,
and its restriction to $|\KK(n;H)|$ determines the automorphism
$\hk'$ of $\KK(n;H)$.
\end{enumerate}
Then we have the following theorem.

\begin{theorem}
\label{thm:Action-Kakimizu}
For a $2$-bridge knot
$K=K(q/p)$ with
\[
\frac{q}{p}=
[2a_1,2a_2,\cdots, 2a_{2n-1}, 2a_{2n}],
\]
the actions of strong inversions of  $K$ on $MS(K)$
are described as follows.

{\rm (1)}
The automorphisms of $MS(K)\cong \KK(n;H)$ induced by
the strong inversions $h_{q/p}$ and $h_{q'/p}$ of $K$
are both equal to the automorphism $\hk$.
Moreover, the following hold,
where $F_+:=F(\rho_+)=F(+1,\cdots, +1)$
and $F_-=F(\rho_-)=F(-1,\cdots,-1)$.
\begin{enumerate}
\item[{\rm(i)}]
Suppose $|a_i|=1$ either for all odd $i$ or for all even $i$,
namely, $H=H(q/p)$ contains either all odd $i$ or all even $i$.
Then $F_+$ and $F_-$ represent the same vertex of $MS(K)$,
and it is the unique fixed point of $\hk$.
\item[{\rm(ii)}]
Suppose the above condition does not hold,
i.e., $|a_i|\ge 2$ for some odd $i$ and also for some even $i$.
Then $F_+$ and $F_-$
represent distinct vertices of $MS(K)$,
and they span an edge $e$ in $MS(K)$.
The center of $e$ is the unique fixed point of $\hk$,
and $e$ is the unique edge preserved by $\hk$.
\end{enumerate}

{\rm (2)}
Suppose $q^2\equiv 1 \pmod{p}$.
Then $H=H(q/p)$ is symmetric, and 
the automorphism of $MS(K)\cong \KK(n;H)$ induced by
the exceptional strong inversion $h'_{q/p}$
is equal to $\hk'$.
In particular, $\Fix(\hk')$ is a ball properly embedded in 
$|\KK(n;H)|$
of dimension $n-\frac{1}{2}(\# H)$.
\end{theorem}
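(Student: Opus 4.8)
The plan is to prove Theorem \ref{thm:Action-Kakimizu} by reducing the geometric statement about strong inversions acting on $MS(K)$ to the linear-algebraic description of $\KK(n;H)$ established in Proposition \ref{prop:structure-MS}. The main point is that the induced automorphisms are forced, on the level of the ambient vector space $\RR^{2n-1}$, to be the maps $\tilde\hk$ and $\tilde\hk'$, after which everything follows from explicit computation with the cube $[-1,1]^{2n-1}$ and its quotient. Concretely, I would proceed as follows.

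First I would identify, for each strong inversion, how it acts on the combinatorial data $\mathcal{O}(T)$ (equivalently on the corners $\{-1,1\}^{2n-1}$). The strong inversion $h_{q/p}$ exchanges the two alternating orientations $\rho_+\leftrightarrow\rho_-$, since geometrically it interchanges the surfaces $F_+=F(\rho_+)=R_+(\gamma)$ and $F_-=F(\rho_-)=R_-(\gamma)$ of the sutured manifold $(M,\gamma)$ from Figure \ref{fig:sysSmfd} (this is exactly the content of Subsection \ref{subsec:4.1}). Because the bijection $\rho\mapsto\ee$ of Proposition \ref{prop:structure-MS-special} sends $\rho_+\mapsto(+1,\dots,+1)$ and $\rho_-\mapsto(-1,\dots,-1)$, and because the whole action is an affine automorphism of the cube determined by its effect on these antipodal vertices together with the requirement that it permute the generating vectors $\{\vv_i\}$, I would argue that the induced map on $\KK(n)$ must be the central symmetry $\ee\mapsto-\ee$, i.e.\ $\tilde\hk$. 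For $h_{q'/p}$ one uses that $q'/p=[2a_{2n},\dots,2a_1]$ reverses the plumbing tree $T$, so a priori it acts by the anti-diagonal reflection; but combining the reversal with the $\rho_+\leftrightarrow\rho_-$ exchange that $h_{q'/p}$ also induces shows the net automorphism is again $\tilde\hk$. For the palindromic exceptional inversion $h'_{q/p}$ of part (2), the relevant symmetry is the index reversal $i\mapsto 2n-i$ \emph{without} the orientation flip, which is precisely $\tilde\hk'$; here I would check that palindromicity of $[2a_1,\dots,2a_{2n}]$ makes $H$ symmetric, so that $\tilde\hk'$ preserves $W$ and descends to $\KK(n;H)$.

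Next I would verify the fixed-point assertions by direct computation. For $\hk=\tilde\hk$ on $\KK(n;H)=\pi(\KK(n))$, a point is fixed iff its representative in $\RR^{2n-1}/W$ is negated, i.e.\ lies in the $(-1)$-eigenspace modulo $W$; since $\tilde\hk=-\mathrm{id}$, every class is in this eigenspace, so $\Fix(\hk)$ is the image of the origin, a single point. The dichotomy in (1-i) versus (1-ii) is then the question of whether the origin of $\RR^{2n-1}/W$ is the image of a \emph{vertex} of the cube or the \emph{midpoint of an edge}: the origin is $\pi(\ee)$ for a corner $\ee$ exactly when $\ee\equiv-\ee\pmod W$, which one computes holds iff $H$ contains all odd or all even indices (the condition in (1-i)), whereas otherwise the origin is the center of the $\hk$-invariant edge joining $\pi(F_+)$ and $\pi(F_-)$. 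I would cross-check this with the identification $F_+=F_-$ in $MS(K)$ precisely when $H$ meets every cycle, matching Remark (2) after Proposition \ref{prop:structure-MS}. For part (2), $\Fix(\tilde\hk')$ is the fixed subspace of the anti-diagonal involution intersected with $|\KK(n;H)|$; its dimension is $\lceil(2n-1)/2\rceil=n$ before quotienting, and passing to $\RR^{2n-1}/W$ subtracts $\frac12(\#H)$ because symmetry of $H$ pairs its elements (with the middle index self-paired), yielding a properly embedded ball of dimension $n-\frac12(\#H)$.

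The hard part will be justifying rigorously that the induced simplicial automorphisms \emph{are} the claimed linear maps, rather than merely agreeing with them on a few distinguished vertices. An automorphism of $MS(K)$ need not extend to a linear automorphism of the cube a priori, so I would need to argue that the geometric action respects the structure recorded in the bijection $\rho\mapsto\ee$: namely that an elementary move $\rho\to v_i(\rho)$ (adding $\vv_i$) is carried by the strong inversion to another elementary move, with the index permutation dictated by how the inversion permutes the annuli $A_1,\dots,A_{2n}$ and their plumbing pattern. This is where I would lean on the explicit geometry of Figures \ref{fig:tauarcs1}, \ref{fig:tauarcs2}, and \ref{fig:sysSmfd}, together with Hatcher--Thurston's classification, to pin down the permutation of $\{\vv_i\}$ and thereby force the linear map. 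Once the action on generators and on the antipodal pair $\{\rho_+,\rho_-\}$ is fixed, uniqueness of the linear relation $\sum\vv_i=\vec0$ noted in the definition of $\KK(n)$ rigidifies the automorphism, and the rest is the bookkeeping sketched above.
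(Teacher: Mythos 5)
Your overall strategy coincides with the paper's --- reduce to the Hatcher--Thurston/Sakuma combinatorial model, identify the induced automorphisms with the linear maps $\tilde \hk$ and $\tilde \hk'$, then do the fixed-point analysis by linear algebra --- and your second paragraph essentially reproduces the paper's fixed-point arguments, including the count $n-\frac{1}{2}(\# H)$. The genuine gap is in the step that identifies the automorphisms. The rigidity principle you invoke in your first paragraph is false: knowing that an automorphism exchanges $\rho_+\leftrightarrow\rho_-$ and permutes $\{\vv_1,\dots,\vv_{2n}\}$ does \emph{not} force it to be $\tilde \hk$. Concretely, the linear map $(\epsilon_1,\dots,\epsilon_{2n-1})\mapsto(-\epsilon_{2n-1},\dots,-\epsilon_1)$ (negation composed with index reversal, i.e.\ $\tilde\hk\tilde\hk'$) exchanges $(+1,\dots,+1)$ and $(-1,\dots,-1)$, sends $\vv_i$ to $\vv_{2n+1-i}$, and induces a simplicial automorphism of $\KK(n)$, yet it differs from $\tilde\hk$ for every $n\ge 2$. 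The ambiguity is geometrically real, not an artifact: in the palindromic case the composition $h_{q/p}\circ h'_{q/p}$ is a symmetry of $(S^3,K)$ that swaps $F_+$ and $F_-$ and induces exactly this competing map, so no argument using only the action on $\{F_+,F_-\}$ plus ``permuting the generators'' can single out $\hk$. The same defect infects your treatment of $h_{q'/p}$: ``combining the reversal with the $\rho_+\leftrightarrow\rho_-$ exchange'', read as a composition, yields $\tilde\hk\tilde\hk'$, which is the wrong map; what actually works is conjugation (negation conjugated by the reversal identification is again negation), or better, the direct computation below.

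What closes the gap --- and is the entire content of the paper's proof --- is the vertex-level fact that you defer to your final paragraph but never establish: $h_{q/p}$ and $h_{q'/p}$ carry $F(\rho)$ to $F(-\rho)$ for \emph{every} $\rho\in\mathcal{O}(T)$, not merely for $\rho_\pm$, and $h'_{q/p}$ carries $F(\epsilon_1,\dots,\epsilon_{2n-1})$ to $F(\epsilon_{2n-1},\dots,\epsilon_1)$ up to isotopy. Once this is checked from the plumbing description, no affine-extension or rigidity argument is needed at all: a simplicial automorphism of $\KK(n;H)$ is determined by its values on vertices, and $\hk$, $\hk'$ are by definition the simplicial automorphisms induced by the linear maps, so agreement on vertices is agreement. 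With that in hand the rest of your outline runs as in the paper: in (1-ii) the disjointness of $F_\pm=R_\pm(\gamma)$ gives the edge $e$, whose midpoint is the origin, and uniqueness of the preserved edge follows because an inverted edge has its fixed midpoint equal to the unique fixed point, whose carrier is $e$. One small correction to your part (2): palindromy of $[2a_1,\dots,2a_{2n}]$ pairs index $i$ with $2n+1-i$, so a symmetric $H$ has no self-paired ``middle index''; this is precisely why $\# H$ is even and the dimension $n-\frac{1}{2}(\# H)$ is an integer.
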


\figer{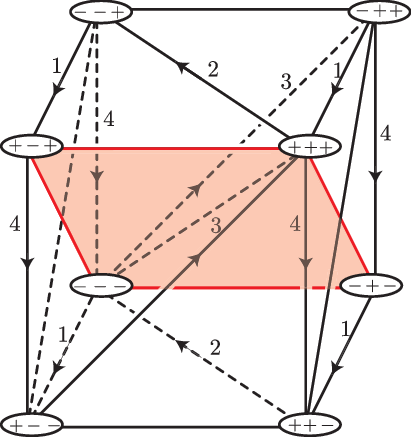}{0.5}{fig:kcpx2}{The action of $\hk'$ on $\KK(4;\emptyset)=\KK(4)$
induced by the exceptional strong inversion $h'_{q/p}$.
The shaded plane represents the fixed point set of $\hk'$.
The automorphism $\hk'$ of $\KK(4;H)$
for any non-empty symmetric subset $H$ of $\{1,2,3,4\}$
is the identity.}

\begin{proof}
(1) For each element $\rho\in\mathcal{O}(T)$,
let $-\rho$ be the element of $\mathcal{O}(T)$
obtained from $\rho$
by reversing the orientation of every edge of $T$.
Then we can observe that both of the
the strong inversions $h_{q/p}$ and $h_{q'/p}$
map the Seifert surface $F(\rho)$ to $F(-\rho)$ for every $\rho\in\mathcal{O}(T)$.
This means that both strong inversions 
map $F(\ee)$ to $F(-\ee)$
for every $\ee\in \{-1,1\}^{2n-1}$.
Hence both of them induce the automorphism $\hk$ of $MS(K)\cong \KK(n;H)$.
Since $\hk$ is determined by the restriction to $|\KK(n;H)|$ of 
the linear-automorphism $\vec{x}\mapsto-\vec{x}$ of $\RR^{2n-1}/W$,
the fixed point set $\Fix(\hk)\subset \KK(n,H)$ consists only of the origin.
On the other hand, the Seifert surfaces $F_+=F(+1,\cdots,+1)$
and $F_-=F(-1,\cdots,-1)$ are disjoint, as observed in the explanation of 
Proposition \ref{prop:structure-MS-special}.
So, if the vertices $[F_+]$ and $[F_-]$ of $MS(K)$ are distinct,
then they span an edge $e$.
The center of $e$ corresponds to the origin, and it
is the unique fixed point of $\hk$.
If $[F_+]=[F_-]$, then the vertex  
corresponds to the origin of $\KK(n:H)$, and it is the unique fixed point of $\hk$.
By using Proposition \ref{prop:structure-MS} 
(cf. \cite[Section 5]{Sakuma1994}),
we can observe that  $[F_+]=[F_-]$
if and only if $|a_i|=1$ either for all odd $i$ or for all even $i$.
Hence we obtain the assertion (1).

(2) We can observe that the exceptional strong inversion $h'_{q/p}$
maps the Seifert surface 
$F(\epsilon_1,\epsilon_2,\cdots,\epsilon_{2n-1})$
to $F(\epsilon_{2n-1},\cdots,\epsilon_2,\epsilon_1)$ up to isotopy
for any
$(\epsilon_1,\epsilon_2,\cdots,\epsilon_{2n-1})$
$\in$
$\{-1,1\}^{2n-1}$.
(To be precise, though the strong equivalence classes of exceptional strong inversions
are not unique as noted in Remark \ref{rem:strong-equivalence}(2),
the above holds for any exceptional strong inversion.) 
This implies that it induces the automorphism $\hk'$ of $MS(K)\cong \KK(n;H)$.
Thus $\Fix(\hk')$ is equal to the ball properly embedded in $|\KK(n;H)|\subset \RR^n/W$
obtained as the intersection of $|\KK(n;H)|$ with the
fixed point set of the linear automorphism of $\RR^{2n-1}/W$ induced by
the linear automorphism $\tilde{\hk}'$ of $\RR^{2n-1}$.
By the definition of $\tilde{\hk}'$,
the fixed point set (or the eigen space with eigen value $1$) of $\tilde{\hk}'$
is the $n$-dimensional space with basis
$\{\be_i+\be_{2n-i}\}_{1\le i\le n}$.
We can observe the set $\{\vv_i-\vv_{2n-i}\}_{1\le i\le n}$ is also a basis of
$\Fix(\tilde{\hk}')$.
On the other hand, since $H$ is symmetric,
there is a subset $J\subset \{1,2,\cdots, n\}$ of cardinality $d=\frac{1}{2}(\# H)$,
such that
$W=\kernel(\pi)$ has a basis 
$\{\vv_{i}, \vv_{2n+1-i}\}_{i\in J}$.
Hence, the subspace 
$W+\Fix(\tilde{\hk'})$ has basis 
$\{\vv_{i}, \vv_{2n+1-i}\}_{i\in J}\cup \{\vv_i-\vv_{2n-i}\}_{i\in J^c}$.
Thus $\dim(W+\Fix(\tilde{\hk'}))=2d+(n-d)=n+d$.
Since the fixed point set of the linear automorphism of $\RR^{2n-1}/W$
is equal to the subspace $(W+\Fix(\tilde{\hk'}))/W$, 
its dimension is equal to $(n+d)-2d=n-d$.
This completes the proof of (2).
\end{proof}

\section{Appendix: Table of enhanced equivariant genera of $2$-bridge knots up to 10 crossings}
By using Theorem \ref{thm:main},
we give a table of equivariant genera for $(K, h, \delta)$ with $K$
$2$-bridge knots up to $10$ crossings.
Let $K$ be the 2-bridge knot
$K(q/p)=K[2a_1,2b_1, 2a_2, 2b_2, \cdots, 2a_n,2n_n]$.
The data of the rationals of $2$-bridge knots 
are imported from the KnotInfo [KI], where
we have replaced $q$ by $p-q$ if $q$ is odd.
The 3rd column of Table shows the negative continued fractions, and
$\bar{n}$ means $-n$.
In the last column, we list the genus and equivariant genera:

{\tiny
$
\begin{array}{cccccccc}
&
\{&
g(K),&
g(K, h_{q/p},\tau_{q/p}),&
g(K, h_{q/p},\tau_{q/p}^c),&
g(K, h_{q'/p},\tau_{q'/p}),&
g(K, h_{q'/p},\tau_{q'/p}^c)&
\} \\
=&
\{&
n,&
n+\#\{i \ \vert\  |a_i|>1\},&
n+\sum_{i}^n( |b_i| -1),&
n+\#\{i \ \vert\  |b_i|>1\},&
n+\sum_{i}^n (|a_i| -1)&
\}\\
=&
\{&
n,&
n+  \#\{i \ \vert\  |a_i|>1\},&
2n - \#\{i \ \vert\  |b_i|=1\},&
n+   \#\{i \ \vert\  |b_i|>1\},&
2n - \#\{i \ \vert\  |a_i|=1\}&
\}
\end{array}
$
}

When $q=p-1$, $K$ is a torus knot and
has two equivariant genera, which both coincide with $g(K)$.
This case is marked as $\{g(K), {\rm \lq\lq torus"}\}$.
If non-tours $K$ is fibered or has the symmetry of 
$q^2\equiv 1 \pmod p$, equivariant genera coinciding with 
$g(K)$ are abbreviated,
 hence we mark the genus $g(K)$ and
attributes \lq\lq fib\rq\rq\  and/or \lq\lq sym\rq\rq\  accordingly.

\medskip

{\tiny
$
\begin{array}{|c|c|c|c|}
\hline
K&q/p&cont.frac&genera\\ \hline
3_1 & \text{2/3} & \{2,2\} & \{1,\text{torus}\} \\ 
 4_1 & \text{2/5} & \left\{2,\bar{2}\right\} & \{1,\text{fib}\} \\
 5_1 & \text{4/5} & \{2,2,2,2\} & \{2,\text{torus}\} \\
 5_2 & \text{4/7} & \{2,4\} & \{1,1,2,2,1\} \\
 6_1 & \text{2/9} & \left\{4,\bar{2}\right\} & \{1,2,1,1,2\} \\
 6_2 & \text{4/11} & \left\{2,\bar{2},\bar{2},\bar{2}\right\} & \{2,\text{fib}\} \\
 6_3 & \text{8/13} & \left\{2,2,\bar{2},\bar{2}\right\} & \{2,\text{fib}\} \\
 7_1 & \text{6/7} & \{2,2,2,2,2,2\} & \{3,\text{torus}\} \\
 7_2 & \text{6/11} & \{2,6\} & \{1,1,2,3,1\} \\
 7_3 & \text{4/13} & \{4,2,2,2\} & \{2,3,2,2,3\} \\
 7_4 & \text{4/15} & \{4,4\} & \{1,\text{sym},2,2\} \\
 7_5 & \text{10/17} & \{2,4,2,2\} & \{2,2,3,3,2\} \\
 7_6 & \text{12/19} & \left\{2,2,\bar{2},2\right\} & \{2,\text{fib}\} \\
 7_7 & \text{8/21} & \left\{2,\bar{2},\bar{2},2\right\} & \{2,\text{fib},\text{sym}\} \\
 8_1 & \text{2/13} & \left\{6,\bar{2}\right\} & \{1,3,1,1,2\} \\
 8_2 & \text{6/17} & \left\{2,\bar{2},\bar{2},\bar{2},\bar{2},\bar{2}\right\} & \{3,\text{fib}\} \\
 8_3 & \text{4/17} & \left\{4,\bar{4}\right\} & \{1,2,2,2,2\} \\
 8_4 & \text{14/19} & \left\{2,2,2,\bar{4}\right\} & \{2,2,3,3,2\} \\
 8_6 & \text{10/23} & \left\{2,\bar{4},\bar{2},\bar{2}\right\} & \{2,2,3,3,2\} \\
 8_7 & \text{14/23} & \left\{2,2,\bar{2},\bar{2},\bar{2},\bar{2}\right\} & \{3,\text{fib}\} \\
 8_8 & \text{16/25} & \left\{2,2,\bar{4},\bar{2}\right\} & \{2,3,2,2,3\} \\
 8_9 & \text{18/25} & \left\{2,2,2,\bar{2},\bar{2},\bar{2}\right\} & \{3,\text{fib}\} \\
 8_{11} & \text{10/27} & \left\{2,\bar{2},\bar{2},\bar{4}\right\} & \{2,2,3,3,2\} \\
 8_{12} & \text{12/29} & \left\{2,\bar{2},2,\bar{2}\right\} & \{2,\text{fib}\} \\
 8_{13} & \text{18/29} & \left\{2,2,\bar{2},\bar{4}\right\} & \{2,2,3,3,2\} \\
 8_{14} & \text{12/31} & \left\{2,\bar{2},\bar{4},\bar{2}\right\} & \{2,3,2,2,3\} \\
 9_1 & \text{8/9} & \{2,2,2,2,2,2,2,2\} & \{4,\text{torus}\} \\
 9_2 & \text{8/15} & \{2,8\} & \{1,1,2,4,1\} \\
 9_3 & \text{6/19} & \{4,2,2,2,2,2\} & \{3,4,3,3,4\} \\
 9_4 & \text{16/21} & \{2,2,2,6\} & \{2,2,3,4,2\} \\
 9_5 & \text{6/23} & \{4,6\} & \{1,2,2,3,2\} \\
 9_6 & \text{22/27} & \{2,2,2,2,4,2\} & \{3,4,3,3,4\} \\
 9_7 & \text{16/29} & \{2,6,2,2\} & \{2,2,3,4,2\} \\
 9_8 & \text{20/31} & \left\{2,2,\bar{4},2\right\} & \{2,3,2,2,3\} \\
 9_9 & \text{22/31} & \{2,2,4,2,2,2\} & \{3,4,3,3,4\} \\
 9_{10} & \text{10/33} & \{4,2,2,4\} & \{2,\text{sym},3,3\} \\
 9_{11} & \text{14/33} & \left\{2,\bar{2},2,2,2,2\right\} & \{3,\text{fib}\} \\
 9_{12} & \text{22/35} & \left\{2,2,\bar{2},4\right\} & \{2,2,3,3,2\} \\
 9_{13} & \text{10/37} & \{4,4,2,2\} & \{2,3,3,3,3\} \\
 9_{14} & \text{14/37} & \left\{2,\bar{2},\bar{2},4\right\} & \{2,2,3,3,2\} \\
 9_{15} & \text{16/39} & \left\{2,\bar{2},4,2\right\} & \{2,3,2,2,3\} \\
 9_{17} & \text{14/39} & \left\{2,\bar{2},\bar{2},\bar{2},\bar{2},2\right\} & \{3,\text{fib},\text{sym}\} \\
 9_{18} & \text{24/41} & \{2,4,2,4\} & \{2,2,4,4,2\} \\
 9_{19} & \text{16/41} & \left\{2,\bar{2},\bar{4},2\right\} & \{2,3,2,2,3\} \\
 9_{20} & \text{26/41} & \left\{2,2,\bar{2},2,2,2\right\} & \{3,\text{fib}\} \\
 9_{21} & \text{18/43} & \left\{2,\bar{2},2,4\right\} & \{2,2,3,3,2\} \\
 9_{23} & \text{26/45} & \{2,4,4,2\} & \{2,\text{sym},3,3\} \\
 \hline
\end{array}$
$
\begin{array}{|c|c|c|c|}
\hline
 9_{26} & \text{18/47} & \left\{2,\bar{2},\bar{2},2,2,2\right\} & \{3,\text{fib}\} \\
 9_{27} & \text{30/49} & \left\{2,2,\bar{2},\bar{2},\bar{2},2\right\} & \{3,\text{fib}\} \\
 9_{31} & \text{34/55} & \left\{2,2,\bar{2},\bar{2},2,2\right\} & \{3,\text{fib},\text{sym}\} \\
 10_1 & \text{2/17} & \left\{8,\bar{2}\right\} & \{1,4,1,1,2\} \\
 10_2 & \text{8/23} & \left\{2,\bar{2},\bar{2},\bar{2},\bar{2},\bar{2},\bar{2},\bar{2}\right\} & \{4,\text{fib}\} \\
 10_3 & \text{6/25} & \left\{4,\bar{6}\right\} & \{1,2,2,3,2\} \\
 10_4 & \text{20/27} & \left\{2,2,2,\bar{6}\right\} & \{2,2,3,4,2\} \\
 10_5 & \text{20/33} & \left\{2,2,\bar{2},\bar{2},\bar{2},\bar{2},\bar{2},\bar{2}\right\} & \{4,\text{fib}\} \\
 10_6 & \text{16/37} & \left\{2,\bar{4},\bar{2},\bar{2},\bar{2},\bar{2}\right\} & \{3,3,4,4,3\} \\
 10_7 & \text{16/43} & \left\{2,\bar{2},\bar{2},\bar{6}\right\} & \{2,2,3,4,2\} \\
 10_8 & \text{6/29} & \left\{4,\bar{2},\bar{2},\bar{2},\bar{2},\bar{2}\right\} & \{3,4,3,3,4\} \\
 10_9 & \text{28/39} & \left\{2,2,2,\bar{2},\bar{2},\bar{2},\bar{2},\bar{2}\right\} & \{4,\text{fib}\} \\
 10_{10} & \text{28/45} & \left\{2,2,\bar{2},\bar{6}\right\} & \{2,2,3,4,2\} \\
 10_{11} & \text{30/43} & \left\{2,2,4,\bar{4}\right\} & \{2,3,3,3,3\} \\
 10_{12} & \text{30/47} & \left\{2,2,\bar{4},\bar{2},\bar{2},\bar{2}\right\} & \{3,4,3,3,4\} \\
 10_{13} & \text{22/53} & \left\{2,\bar{2},2,\bar{4}\right\} & \{2,2,3,3,2\} \\
 10_{14} & \text{22/57} & \left\{2,\bar{2},\bar{4},\bar{2},\bar{2},\bar{2}\right\} & \{3,4,3,3,4\} \\
 10_{15} & \text{24/43} & \left\{2,4,\bar{2},\bar{2},\bar{2},\bar{2}\right\} & \{3,3,4,4,3\} \\
 10_{16} & \text{14/47} & \left\{4,2,2,\bar{4}\right\} & \{2,3,3,3,3\} \\
 10_{17} & \text{32/41} & \left\{2,2,2,2,\bar{2},\bar{2},\bar{2},\bar{2}\right\} & \{4,\text{fib}\} \\
 10_{18} & \text{32/55} & \left\{2,4,2,\bar{4}\right\} & \{2,2,4,4,2\} \\
 10_{19} & \text{14/51} & \left\{4,2,\bar{2},\bar{2},\bar{2},\bar{2}\right\} & \{3,4,3,3,4\} \\
 10_{20} & \text{16/35} & \left\{2,\bar{6},\bar{2},\bar{2}\right\} & \{2,2,3,4,2\} \\
 10_{21} & \text{16/45} & \left\{2,\bar{2},\bar{2},\bar{2},\bar{2},\bar{4}\right\} & \{3,3,4,4,3\} \\
 10_{22} & \text{36/49} & \left\{2,2,2,\bar{4},\bar{2},\bar{2}\right\} & \{3,3,4,4,3\} \\
 10_{23} & \text{36/59} & \left\{2,2,\bar{2},\bar{2},\bar{2},\bar{4}\right\} & \{3,3,4,4,3\} \\
 10_{24} & \text{24/55} & \left\{2,\bar{4},\bar{2},\bar{4}\right\} & \{2,2,4,4,2\} \\
 10_{25} & \text{24/65} & \left\{2,\bar{2},\bar{2},\bar{4},\bar{2},\bar{2}\right\} & \{3,3,4,4,3\} \\
 10_{26} & \text{44/61} & \left\{2,2,2,\bar{2},\bar{2},\bar{4}\right\} & \{3,3,4,4,3\} \\
 10_{27} & \text{44/71} & \left\{2,2,\bar{2},\bar{4},\bar{2},\bar{2}\right\} & \{3,3,4,4,3\} \\
 10_{28} & \text{34/53} & \left\{2,2,\bar{4},\bar{4}\right\} & \{2,3,3,3,3\} \\
 10_{29} & \text{26/63} & \left\{2,\bar{2},2,\bar{2},\bar{2},\bar{2}\right\} & \{3,\text{fib}\} \\
 10_{30} & \text{26/67} & \left\{2,\bar{2},\bar{4},\bar{4}\right\} & \{2,3,3,3,3\} \\
 10_{31} & \text{32/57} & \left\{2,4,\bar{2},\bar{4}\right\} & \{2,2,4,4,2\} \\
 10_{32} & \text{40/69} & \left\{2,4,2,\bar{2},\bar{2},\bar{2}\right\} & \{3,3,4,4,3\} \\
 10_{33} & \text{18/65} & \left\{4,2,\bar{2},\bar{4}\right\} & \{2,3,3,3,3\} \\
 10_{34} & \text{24/37} & \left\{2,2,\bar{6},\bar{2}\right\} & \{2,4,2,2,3\} \\
 10_{35} & \text{20/49} & \left\{2,\bar{2},4,\bar{2}\right\} & \{2,3,2,2,3\} \\
 10_{36} & \text{20/51} & \left\{2,\bar{2},\bar{6},\bar{2}\right\} & \{2,4,2,2,3\} \\
 10_{37} & \text{30/53} & \left\{2,4,\bar{4},\bar{2}\right\} & \{2,3,3,3,3\} \\
 10_{38} & \text{34/59} & \left\{2,4,4,\bar{2}\right\} & \{2,3,3,3,3\} \\
 10_{39} & \text{22/61} & \left\{2,\bar{2},\bar{2},\bar{2},\bar{4},\bar{2}\right\} & \{3,4,3,3,4\} \\
 10_{40} & \text{46/75} & \left\{2,2,\bar{2},\bar{2},\bar{4},\bar{2}\right\} & \{3,4,3,3,4\} \\
 10_{41} & \text{26/71} & \left\{2,\bar{2},\bar{2},\bar{2},2,\bar{2}\right\} & \{3,\text{fib}\} \\
 10_{42} & \text{50/81} & \left\{2,2,\bar{2},\bar{2},2,\bar{2}\right\} & \{3,\text{fib}\} \\
 10_{43} & \text{46/73} & \left\{2,2,\bar{2},2,\bar{2},\bar{2}\right\} & \{3,\text{fib}\} \\
 10_{44} & \text{30/79} & \left\{2,\bar{2},\bar{2},2,\bar{2},\bar{2}\right\} & \{3,\text{fib}\} \\
 10_{45} & \text{34/89} & \left\{2,\bar{2},\bar{2},2,2,\bar{2}\right\} & \{3,\text{fib}\} \\
 \hline
\end{array}
$}

\end{document}